\providecommand{\U}[1]{\protect\rule{.1in}{.1in}}
\definecolor{c20}{rgb}{0.,0.0,0.}
\definecolor{c30}{rgb}{0.,0.,0.}
\definecolor{c40}{rgb}{0,0.0,0.0}
\definecolor{c50}{rgb}{0,0,0}
\newtheorem{theorem}{Theorem}
\newtheorem{conclusion}{Conclusion}
\newtheorem{corollary}{Corollary}
\newtheorem{example}{Example}
\newtheorem{lemma}[theorem]{Lemma}
\newtheorem{proposition}{Proposition}
\newtheorem{remark}{Remark}
\newenvironment{proof}[1][Proof]{\noindent\textbf{#1.} }{\ \rule{0.5em}{0.5em}}
\begin{document}

\title{Joint distribution of a random sample and an order statistic: A new
approach with an application in reliability analysis}
\author{ Ismihan Bairamov\thanks{%
Department of Mathematics, Faculty of Arts and Sciences, Izmir University of
Economics, 35330, Balcova, Izmir, Turkey, \textit{%
ismihan.bayramoglu@ieu.edu.tr}}, {\small \textit{Izmir University of
Economics, Turkey}}}
\maketitle

\begin{abstract}
This paper considers the joint distribution of elements of a random sample
and an order statistic of the same sample. \ The motivation for this work
stems from the important problem in reliability analysis, to estimate the
number of inspections we need in order to detect failed components in a
coherent system. We consider an $(n-r+1)$-out-of-$n$ system, which is intact
until at least $n-r+1$ of the components are alive, and it fails if the
number of failed components exceeds $r$. The life time of the system is the $%
r$th order statistic. Assuming that some of the components failed but the
system \ is still functioning, \ using the results presented in this paper
it is possible to find an expected value of the number of inspections we
need to do for detecting certain number of failed components.

\textbf{Keywords: }Order statistics, $k$-out-of-$n$ system, joint
distributions \ \
\end{abstract}

\section{Introduction}

Let $X_{1},X_{2},...,X_{n}$ be independent and identically distributed (iid)
random variables with distribution function (cdf) $F$ and $\ X_{1:n}\leq
X_{2:n}\leq \cdots \leq X_{n:n}$ be the order statistics. If $%
X_{1},X_{2},...,X_{n}$ are corresponding lifetimes of components of a
coherent system, then for $1\leq r\leq n$\ the conditional probability
\begin{equation}
P\{X_{1}\leq x\mid X_{r:n}\leq t\}  \label{1}
\end{equation}%
is the distribution of lifetime of any of components given that at the
inspection time $t$ at least $r$ of the components have failed. The
conditional distribution
\begin{equation}
P\{X_{1}\leq x\mid X_{r:n}=t\}  \label{2}
\end{equation}%
is studied in Nagaraja and Nevzorov (1997) and Nagaraja and Ahmadi (2018) in
the context of $(n-r+1)-$out-of-$n$ systems whose lifetime $%
T(X_{1},X_{2},...,X_{n})$ is $X_{r:n},$ i.e. the system that fails if more
than $r$ components fail and the system is intact if at least $(n-r+1)$ of
components are alive. The probability (\ref{2}) is actually the conditional
cdf of any of the components of $(n-r+1)$-out-of-$n$ coherent system given
that the system failed at time $t.$ Nagaraja and Ahmadi (2018) \ used (\ref%
{2}) and related joint distributions to find the distribution of number of
inspections which is necessary for detecting of all failed components if the
system failed at time $t.$

In this paper we are interested also in conditional distribution
\begin{equation}
P\{X_{1}\leq x\mid t_{1}\leq X_{r:n}\leq t_{2}\},  \label{3}
\end{equation}%
which can be interpreted as the conditional distribution of any of the
components given that the $r$th \ failure has occurred between two
inspections at$\ t_{1}$ and $t_{2},$ i.e. there are $r$ failed components
that we reveal in time interval $[t_{1},t_{2}].$ \ The conditional
distribution (\ref{3}) \ carries information about the life time
distribution of any of the components given that the $r$th failure has
occurred between two inspection times $t_{1}$and $t_{2}.$ The random
variables $X_{1},X_{2},...,X_{n}$ can also be considered as the lifetimes of
$n$ identical items put under life test and then (\ref{1}) is the
conditional distribution of any of items given that at inspection time $t,$
there are at least $r$ failed items. \ In \ practical applications a system
monitoring is important, and it is scheduled at different inspection times.
Under double monitoring one may \ consider the residual and past life
functions of the system $(T-t_{1}\mid t_{1}<_{1}T<t_{2})$ and ($t_{2}-T\mid
t_{1}<T<t_{2})$ which is studied in many research papers including Raqab
(2010), Bdair and Raqab (2014), Li and Zhao (2008), Li and Zhang (2008),
Parvardeh et al. (2018), Poursaeed (2010), Poursaeed and Nemathollahi
(2010a), Poursaeed and \ Nemathollahi (2010b), Zhang and Meeker (2013), and
Zhang and Yang (2010), Ery\i lmaz (2013), Tavangar and Bairamov (2015),
Samadi et al. (2017). \ In a recent paper Navarro \ and Cali (2018) \
consider a system with dependent components assuming that system is exposed
to periodical inspections. In the results of these inspections, it may be
known that the system was working at time $t_{1},$ but it failed at time $%
t_{2}.$ Under these conditions Navarro \ and Cali (2018) investigate the
system inactivity time ($t_{2}-T\mid t_{1}<T<t_{2})$ for both independent
and dependent lifetimes and obtain representations for reliability functions
in terms of copula.

The focus of this paper is the\ joint distribution of $X_{1},X_{2},...,X_{k}$
and $X_{r:n}$ for $k<r.$ $\ $First, we consider the joint distribution of $%
X_{i}$ and $X_{r:n},$ $i\in \{1,2,...,n\}$ $\ $as well as the conditional
distribution of $X_{i}$ given $X_{r:n}\leq t$ and derive the conditional
distribution of $X_{1}$ given $t_{1}\leq X_{\dot{r}:n}\leq t_{2}$ for any $%
t_{1}<t_{2}.$ \ The difficulty of finding the joint distribution of random
variables $X_{1},X_{2},...,X_{k}$ and $X_{r:n}$ \ is concluded in the fact
that $X_{r:n}$ is one of the random variables $X_{1},X_{2},...,X_{n}.$ \
Second, we apply the obtained results to solve an important problem in
reliability analysis: the problem of estimating the number of inspections we
need in order to detect failed components of a coherent system. Since
inspections of the components of the system may sometimes be an expensive
action, the optimal planning of periodical inspections is very important. If
we interpret $X_{i}$'s as the lifetimes of components of $(n-r+1)$-out-of-$n$
system, then the joint distribution of the random variables $%
X_{1},X_{2},...,X_{k}$ and $X_{r:n}$ is necessary to compute the
probabilities of the events of type $%
X_{1}<X_{r:n},X_{2}<X_{r:n},...,X_{k}<X_{r:n}$ which are used for computing
the probabilities of numbers of inspections we need in order to detect
failed components. This paper is organized as follows: in Section 1 we
derive the joint distribution of a single observation from the sample and
the $r$th order statistic of the same sample and consider the conditional
distribution of an observation given that the $r$th order statistic is
between $t_{1}$ and $t_{2}.$ Then we consider the joint distributions of
several sample observations and an order statistic of the same sample and
show that the conditional random variables defined as a set of observations
given order statistic are in general dependent, except some special cases. \
In Section 3 we \ consider the joint distributions of the set of sample
observations and an order statistic. In Section 4 we deal with the
distribution of the number of inspections one needs in order to detect
failed components in an $(n-r+1)$-out-of-$n$ system and provide a numerical
example.

\section{The joint distributions of the random variables and their order
statistics}

Throughout this paper we assume that $X_{1},X_{2},...,X_{n}$ be iid random
variables with cdf $F$ $\ $and $X_{1:n}\leq X_{2:n}\leq \cdots \leq X_{n:n}$
be the order statistics. Where it is needed we will assume that $F$ is an
absolutely continuous cdf with pdf $F^{\prime }(x)=f(x)$ supported in $%
[0,\infty )$ and $X_{i},i\in \{1,2,...,n\}$ are lifetimes of the components
of coherent system of $n$ components.

\begin{theorem}
\label{Theorem 1}The joint distribution of $X_{1}$ and $X_{r:n}$ is
\begin{eqnarray}
P\{X_{1} &\leq &x,X_{r:n}\leq t\}  \notag \\
&=&\left\{
\begin{array}{c}
\begin{tabular}{ll}
$F(x)\sum\limits_{i=r-1}^{n-1}\binom{n-1}{i}F^{i}(t)(1-F(t))^{n-1-i},$ & $%
x\leq t$ \\
$%
\begin{tabular}{l}
$\left[ F(x)\sum\limits_{i=r-1}^{n-1}\binom{n-1}{i}F^{i}(t)(1-F(t))^{n-1-i}%
\right. $ \\
$\left. -\binom{n-1}{r-1}(F(x)-F(t))F^{r-1}(t)(1-F(t))^{n-r}\right] $%
\end{tabular}%
,$ & $x>t$%
\end{tabular}
\\
\end{array}%
\right.  \label{0}
\end{eqnarray}
\end{theorem}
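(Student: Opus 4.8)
The plan is to exploit the fact that the event $\{X_{r:n}\leq t\}$ is equivalent to the statement that at least $r$ of the $n$ variables fall in $(-\infty,t]$, so that it is governed by a binomial count, and then to condition on the location of $X_1$ relative to $t$, using the independence of $X_1$ from $X_2,\ldots,X_n$. Writing $B_m=\sum_{i=m}^{n-1}\binom{n-1}{i}F^i(t)(1-F(t))^{n-1-i}$ for the probability that at least $m$ of the $n-1$ variables $X_2,\ldots,X_n$ lie in $(-\infty,t]$, the whole argument reduces to tracking how many of these remaining $n-1$ variables must fall below $t$ once the position of $X_1$ is fixed.

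For the case $x\le t$ I would first note that on the event $\{X_1\le x\}$ we automatically have $X_1\le t$, so $X_1$ already supplies one of the required $r$ values below $t$; hence $\{X_{r:n}\le t\}$ now needs at least $r-1$ of $X_2,\ldots,X_n$ to be $\le t$. By independence this factorises as $P\{X_1\le x\}\,B_{r-1}=F(x)B_{r-1}$, which is exactly the claimed first branch.

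For the case $x>t$ I would split the event according to whether $X_1\le t$ or $t<X_1\le x$. On $\{X_1\le t\}$ the variable $X_1$ again contributes to the count, so we need $r-1$ of the others below $t$, giving $F(t)B_{r-1}$; on $\{t<X_1\le x\}$ the variable $X_1$ exceeds $t$ and does not count, so we now need a full $r$ of the others below $t$, giving $(F(x)-F(t))B_r$. Adding the two contributions yields $P=F(t)B_{r-1}+(F(x)-F(t))B_r$.

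The final step, and the only place requiring care, is to show this two-sum expression equals the single-sum form stated in the theorem. Using the identity $B_{r-1}=B_r+\binom{n-1}{r-1}F^{r-1}(t)(1-F(t))^{n-r}$, I would rewrite both branches against $B_{r-1}$; the $B_r$ pieces then combine into $F(x)B_{r-1}$ while the leftover collapses into the correction term $-\binom{n-1}{r-1}(F(x)-F(t))F^{r-1}(t)(1-F(t))^{n-r}$. The main obstacle is purely bookkeeping: one must correctly recognise that crossing the threshold $t$ shifts the binomial index from $r-1$ to $r$, and then reconcile the natural two-sum answer with the paper's equivalent one-sum-plus-correction form.
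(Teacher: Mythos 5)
Your proposal is correct and follows essentially the same route as the paper's own proof: the case $x\leq t$ is handled by factoring out $F(x)$ via independence with the "at least $r-1$ of the remaining $n-1$" count, and the case $x>t$ is split on $\{X_1\leq t\}$ versus $\{t<X_1\leq x\}$, yielding $F(t)B_{r-1}+(F(x)-F(t))B_r$, which is then reconciled with the stated form by the same index-shift identity $B_r=B_{r-1}-\binom{n-1}{r-1}F^{r-1}(t)(1-F(t))^{n-r}$ that the paper uses. No gaps; the bookkeeping you flag as the delicate step is exactly the algebra the paper carries out (the paper even has a harmless typo, $F(x)-F(t)-F(t)$, at that point, which your derivation gets right).
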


\begin{proof}
a) Let $x\leq t.$ \ We have
\begin{eqnarray*}
P\{X_{1} &\leq &x,X_{r:n}\leq t\}=P\{X_{1}\leq x,\text{at least \ }r-1\text{
of } \\
&&X_{2},X_{3},...,X_{n}\text{ are less or equal than }t\} \\
&=&F(x)P\{\text{exactly }r-1\text{ of }X_{2},X_{3},...,X_{n}\text{ are less
or equal than }t\} \\
&=&F(x)\sum\limits_{i=r-1}^{n-1}\binom{n-1}{i}F^{i}(t)(1-F(t))^{n-1-i}\text{
}
\end{eqnarray*}

b) Let $x>t.$ Using the total probability formula one can write
\begin{eqnarray*}
P\{X_{1} &\leq &x,X_{r:n}\leq t\} \\
&=&P\{X_{1}\leq x,X_{1}>t,X_{r:n}\leq t\}+P\{X_{1}\leq x,X_{1}\leq
t,X_{r:n}\leq t\} \\
&=&P\{X_{1}\leq x,X_{1}>t,\text{at least \ }r\text{ of }X_{2},X_{3},...,X_{n}%
\text{ are less than or equal to }t\} \\
+P\{X_{1} &\leq &x,X_{1}\leq t,\text{at least \ }r-1\text{ of }%
X_{2},X_{3},...,X_{n}\text{ are less than or equal to }t\}
\end{eqnarray*}%
\begin{eqnarray*}
&=&(F(x)-F(t))\sum\limits_{i=r}^{n-1}\binom{n-1}{i}F^{i}(t)(1-F(t))^{n-1-i}
\\
&&+F(t)\sum\limits_{i=r-1}^{n-1}\binom{n-1}{i}F^{i}(t)(1-F(t))^{n-1-i} \\
&=&(F(x)-F(t))\sum\limits_{i=r-1}^{n-1}\binom{n-1}{i}F^{i}(t)(1-F(t))^{n-1-i}
\\
&&-(F(x)-F(t))\binom{n-1}{r-1}F^{r-1}(t)(1-F(t))^{n-r} \\
&&+F(t)\sum\limits_{i=r-1}^{n-1}\binom{n-1}{i}F^{i}(t)(1-F(t))^{n-1-i}
\end{eqnarray*}%
\begin{eqnarray*}
&=&\sum\limits_{i=r-1}^{n-1}\binom{n-1}{i}%
F^{i}(t)(1-F(t))^{n-1-i}(F(x)-F(t)-F(t)) \\
&&-(F(x)-F(t))\binom{n-1}{r-1}F^{r-1}(t)(1-F(t))^{n-r}.
\end{eqnarray*}%
The theorem is thus proved.
\end{proof}

\begin{corollary}
\label{Corollary 1}The conditional distribution of $X_{1}$ given $%
X_{r:n}\leq t$ is
\begin{eqnarray}
P\{X_{1} &\leq &x\mid X_{r:n}\leq t\}  \notag \\
&=&\left\{
\begin{tabular}{lll}
$F(x)\sum\limits_{i=r-1}^{n-1}\binom{n-1}{i}F^{i}(t)(1-F(t))^{n-1-i}$ &  &
\\
$\times \left( \sum\limits_{i=r}^{n}\binom{n}{i}F^{i}(t)(1-F(t))^{n-i}%
\right) ^{-1}$ & if & $x\leq t$ \\
\begin{tabular}{l}
$\left[ F(x)\sum\limits_{i=r-1}^{n-1}\binom{n-1}{i}F^{i}(t)(1-F(t))^{n-1-i}%
\right. $ \\
$\left. -\binom{n-1}{r-1}(F(x)-F(t))F^{r-1}(t)(1-F(t))^{n-r}\right] $ \\
$\times \left( \sum\limits_{i=r}^{n}\binom{n}{i}F^{i}(t)(1-F(t))^{n-i}%
\right) ^{-1}$%
\end{tabular}
& if & $x>t$%
\end{tabular}%
\right.  \label{4}
\end{eqnarray}
\end{corollary}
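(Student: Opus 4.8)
The plan is to obtain the conditional distribution by dividing the joint distribution already established in Theorem~\ref{Theorem 1} by the marginal distribution of $X_{r:n}$, using the elementary identity
\begin{equation*}
P\{X_{1}\leq x\mid X_{r:n}\leq t\}=\frac{P\{X_{1}\leq x,X_{r:n}\leq t\}}{P\{X_{r:n}\leq t\}}.
\end{equation*}
The numerator is supplied verbatim by Theorem~\ref{Theorem 1} in each of the two regimes $x\leq t$ and $x>t$, so the only genuinely new ingredient is the denominator, namely the marginal cdf of the $r$th order statistic evaluated at $t$.

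To compute the denominator I would use the standard counting description of the event $\{X_{r:n}\leq t\}$: the $r$th smallest observation does not exceed $t$ precisely when at least $r$ of the $n$ iid observations fall in $(-\infty,t]$. Since each $X_{j}\leq t$ independently with probability $F(t)$, the number of observations not exceeding $t$ is Binomial$(n,F(t))$, whence
\begin{equation*}
P\{X_{r:n}\leq t\}=\sum_{i=r}^{n}\binom{n}{i}F^{i}(t)(1-F(t))^{n-i}.
\end{equation*}
This is exactly the reciprocal factor appearing in~(\ref{4}).

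Combining the two pieces and keeping the case split from Theorem~\ref{Theorem 1} intact then yields~(\ref{4}) immediately: in the region $x\leq t$ the numerator is $F(x)\sum_{i=r-1}^{n-1}\binom{n-1}{i}F^{i}(t)(1-F(t))^{n-1-i}$, and in the region $x>t$ it is the bracketed two-term expression, each divided by the common denominator above. There is no real obstacle here; the statement is a direct corollary, and the only point requiring a little care is to observe that the denominator does not depend on $x$, so the case structure of the answer is inherited entirely from the numerator rather than introduced afresh.
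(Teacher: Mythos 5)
Your proposal is correct and is exactly the argument the paper intends by its one-line proof ``Follows from Theorem 1'': divide the joint cdf of Theorem~\ref{Theorem 1} by the marginal $P\{X_{r:n}\leq t\}=\sum_{i=r}^{n}\binom{n}{i}F^{i}(t)(1-F(t))^{n-i}$, obtained from the standard binomial count of observations not exceeding $t$. You simply make explicit the two steps the paper leaves implicit, including the useful observation that the denominator is free of $x$ so the case split carries over unchanged.
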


\begin{proof}
\bigskip Follows from Theorem 1.
\end{proof}

Below in Figure 1 we provide for illustration the graph of the joint
distribution $P\{X_{1}\leq x,X_{r:n}\leq t\}$ for $F(x)=1-\exp (-x),x\geq
0,n=15$ and $r=7.$
\\\\\

\includegraphics[scale=0.22]{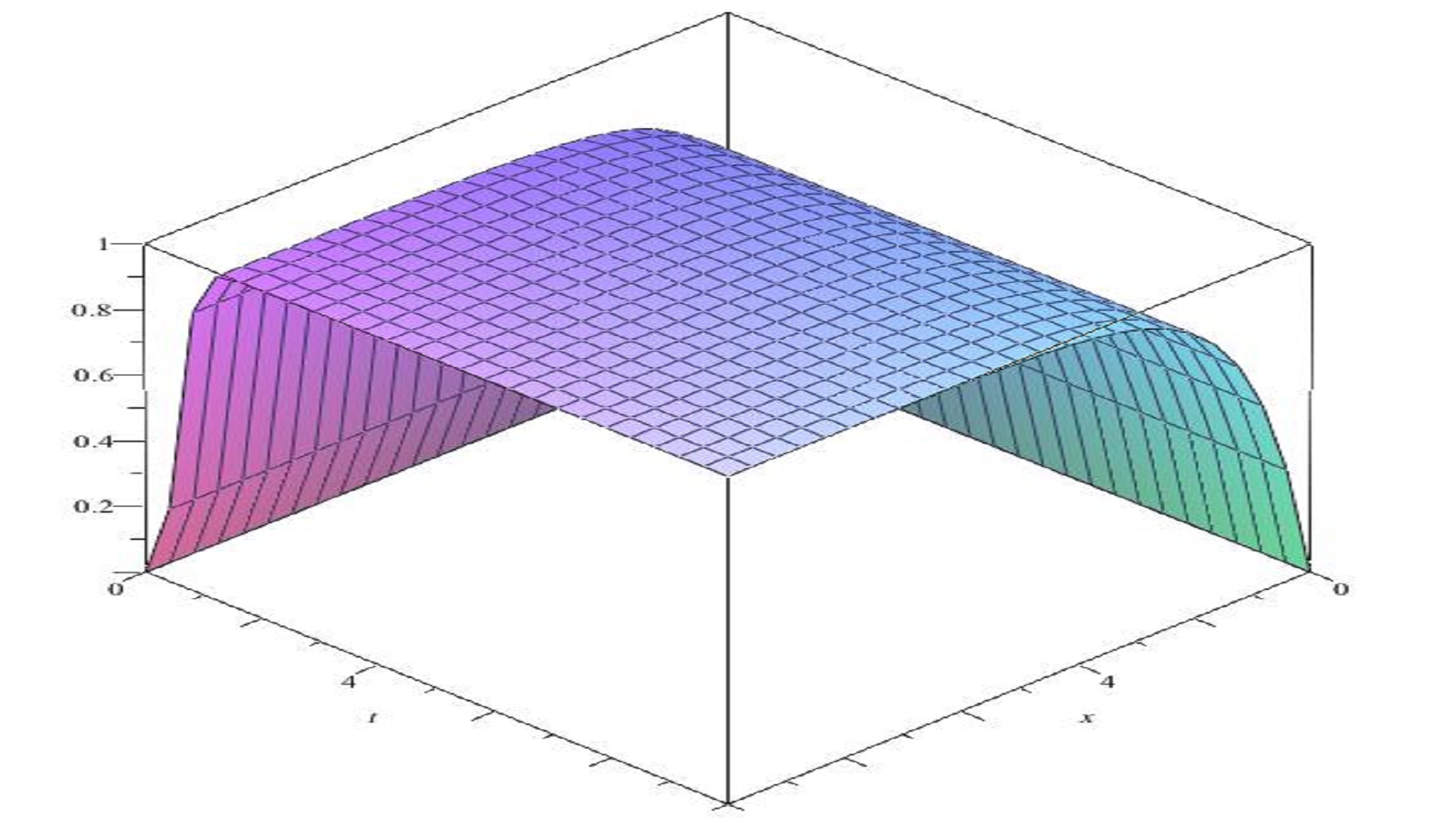}

Figure 1.
The graph of $P\{X_{1}\leq x,X_{r:n}\leq t,$ $n=15,r=7,F(x)=1-\exp (-x),x\geq 0.$

\begin{remark}
\label{Remark 1}Special cases. Because of the importance of formula (\ref{4}%
) (or (\ref{0})) for our research, we can verify it with the special cases $%
r=n$ and $r=1,$ which can be computed by using the properties of extreme
order statistics.
\end{remark}

a) Let $r=n.$ Then if $x<t,$ we have
\begin{eqnarray*}
P\{X_{1} &\leq &x,X_{n:n}\leq t\} \\
&=&P\{X_{1}\leq x,X_{n:n}\leq t\}=P\{X_{1}\leq x,X_{1}\leq t,...,X_{n}\leq
t\} \\
&=&F(x)F^{n-1}(t)
\end{eqnarray*}%
and if $x\geq t$ we have $\ $%
\begin{eqnarray*}
P\{X_{1} &\leq &x,X_{n:n}\leq t\} \\
&=&P\{X_{1}\leq x,X_{n:n}<t\}=P\{X_{1}\leq x,X_{1}\leq t,...,X_{n}\leq t\} \\
&=&F^{n}(t).
\end{eqnarray*}%
Therefore,%
\begin{equation}
P\{X_{1}\leq x,X_{n:n}\leq t\}=\left\{
\begin{tabular}{lll}
$F(x)F^{n-1}(t)$ & if & $x\leq t$ \\
$F^{n}(t).$ & if & $x>t$%
\end{tabular}%
\right.  \label{5}
\end{equation}%
It is clear that
\begin{equation}
P\{X_{1}\leq x\mid X_{n:n}\leq t\}=\left\{
\begin{tabular}{lll}
$\frac{F(x)}{F(t)}$ & if & $x\leq t$ \\
$1$ & if & $x>t$%
\end{tabular}%
\right.  \label{5.0}
\end{equation}

\bigskip Now, let $r=n$ in (\ref{0}) or (\ref{4}) and we clearly obtain (\ref%
{5.0}) or (\ref{5}).

b) Let $r=1.$ Then one can write
\begin{eqnarray*}
P\{X_{1} &\leq &x,X_{1:n}\leq t\} \\
&=&P\{X_{1}\leq x,X_{1:n}\leq t\} \\
&=&P\{X_{1}\leq x\}-P\{X_{1}\leq x,X_{1:n}>t\} \\
&=&F(x)-P\{X_{1}\leq x,X_{1}>t,...,X_{n}>t\}
\end{eqnarray*}%
\begin{equation}
=\left\{
\begin{tabular}{lll}
$F(x)$ & if & $x\leq t$ \\
$F(x)-(F(x)-F(t))(1-F(t)^{n-1}$ & if & $x>t$%
\end{tabular}%
\right. .  \label{6}
\end{equation}%
It is clear that
\begin{eqnarray}
P\{X_{1} &\leq &x\mid X_{1:n}\leq t\}  \notag \\
&=&\left\{
\begin{tabular}{lll}
$\frac{F(x)}{1-(1-F(t))^{n}}$ & if & $x\leq t$ \\
$\frac{F(x)-(F(x)-F(t))(1-F(t)^{n-1}}{1-(1-F(t))^{n}}$ & if & $x>t$%
\end{tabular}%
\right.  \label{6.0}
\end{eqnarray}%
Now, let $r=1$ in (\ref{0}) and (\ref{4}) and one obtains (\ref{6}) and (\ref%
{6.0}).

\begin{remark}
\label{Remark 2}Let $I_{p}(a,b)=\frac{1}{B(a,b)}\int%
\limits_{0}^{p}t^{a-1}(1-t)^{b-1}$ be an incomplete beta function and $%
B(a,b)=\int\limits_{0}^{1}t^{a-1}(1-t)^{b-1}dt$ \ be a beta function. \
Since, $\sum\limits_{i=r}^{n-1}\binom{n-1}{i}%
F^{i}(t)(1-F(t))^{n-1-i}=I_{F(t)}(r,n-r)$ and $\sum\limits_{i=r-1}^{n-1}%
\binom{n-1}{i}F^{i}(t)(1-F(t))^{n-1-i}=I_{F(t)}(r-1,n-r+1),$ for $1<r<n$ (%
\ref{4}) can also be written as
\begin{eqnarray}
P\{X_{1} &\leq &x\mid X_{r:n}\leq t\}=  \notag \\
&&\left\{
\begin{tabular}{lll}
$%
\begin{tabular}{l}
$F(x)I_{F(t)}(r-1,n-r+1)$ \\
$\times \left( I_{F(t)}(r,n-r+1)\right) ^{-1}$%
\end{tabular}%
$ & if & $x\leq t$ \\
&  &  \\
$%
\begin{tabular}{l}
$\lbrack F(x)I_{F(t)}(r-1,n-r+1)$ \\
$-\binom{n-1}{r-1}(F(x)-F(t))F^{r-1}(t)(1-F(t))^{n-r}]$ \\
$\times \left( I_{F(t)}(r,n-r+1)\right) ^{-1}$%
\end{tabular}%
$ & if & $x>t$%
\end{tabular}%
\right.  \label{4b}
\end{eqnarray}%
\begin{equation}
=\left\{
\begin{tabular}{lll}
$%
\begin{tabular}{l}
$F(x)I_{F(t)}(r-1,n-r+1)$ \\
$\times \left( I_{F(t)}(r,n-r+1)\right) ^{-1}$%
\end{tabular}%
$ & if & $x\leq t$ \\
&  &  \\
$%
\begin{tabular}{l}
$\{[F(x)-F(t)]I_{F(t)}(r,n-r)$ \\
$+F(t)I_{F(t)}(r-1,n-r+1)\}$ \\
$\times \left( I_{F(t)}(r,n-r+1)\right) ^{-1}$%
\end{tabular}%
$ & if & $x>t$%
\end{tabular}%
\right. .  \label{4a}
\end{equation}%
where $I_{p}(a,b)=\frac{1}{B(a,b)}\int\limits_{0}^{p}t^{a-1}(1-t)^{b-1}$ is
an incomplete beta function and $B(a,b)=\int%
\limits_{0}^{1}t^{a-1}(1-t)^{b-1}dt$ is a beta function.
\end{remark}

\bigskip

\begin{theorem}
\label{Theorem 2}Let $0<t_{1}<t_{2}.$ Then%
\begin{eqnarray}
P\{X_{1} &\leq &x\mid t_{1}\leq X_{r:n}\leq t_{2}\}  \notag \\
&=&\left\{
\begin{tabular}{ll}
\begin{tabular}{l}
$\left[ F(x)\sum\limits_{i=r-1}^{n-1}\binom{n-1}{i}\left[
F^{i}(t_{2})(1-F(t_{2}))^{n-1-i}\right. \right. $ \\
$\left. \left. -F^{i}(t_{1})(1-F(t_{1}))^{n-1-i}\right] \right] $ \\
$\times \left( \sum\limits_{i=r}^{n}\binom{n}{i}F^{i}(t)(1-F(t))^{n-i}%
\right) ^{-1}$%
\end{tabular}%
$,$ & $x\leq t_{1}$ \\
&  \\
\begin{tabular}{l}
$\left[ F(x)\sum\limits_{i=r-1}^{n-1}\binom{n-1}{i}%
[F^{i}(t_{2})(1-F(t_{2}))^{n-1-i}\right. -$ \\
$-F^{i}(t_{1})(1-F(t_{1}))^{n-1-i}]$ \\
$\left. -\binom{n-1}{r-1}(F(x)-F(t_{1}))F^{r-1}(t_{1})(1-F(t_{1}))^{n-r}%
\right] $ \\
$\times \left( \sum\limits_{i=r}^{n}\binom{n}{i}F^{i}(t)(1-F(t))^{n-i}%
\right) ^{-1}$%
\end{tabular}%
, & $t_{1}\leq x\leq t_{2}$ \\
&  \\
$%
\begin{tabular}{l}
$\left[ F(x)\left[ \sum\limits_{i=r-1}^{n-1}\binom{n-1}{i}%
[F^{i}(t_{2})(1-F(t_{2}))^{n-1-i}\right. \right. $ \\
$-F^{i}(t_{1})(1-F(t_{1}))^{n-1-i}]$ \\
$-\binom{n-1}{r-1}[(F(x)-F(t_{2}))F^{r-1}(t_{2})(1-F(t_{2}))^{n-r}$ \\
$\left. \left. -(F(x)-F(t_{1}))F^{r-1}(t_{1})(1-F(t_{1}))^{n-r}\right] %
\right] $ \\
$\times \left( \sum\limits_{i=r}^{n}\binom{n}{i}F^{i}(t)(1-F(t))^{n-i}%
\right) ^{-1}$%
\end{tabular}%
,$ & $x>t_{2}$%
\end{tabular}%
\right. .  \label{5a}
\end{eqnarray}%
It is clear that for $1<r<n$ (\ref{5a}) can be written as
\begin{eqnarray}
P\{X_{1} &\leq &x\mid t_{1}\leq X_{r:n}\leq t_{2}\}  \notag \\
&=&\left\{
\begin{tabular}{lll}
$%
\begin{tabular}{l}
$F(x)\left[ I_{F(t_{2})}(r-1,n-r+1)-I_{F(t_{1})}(r-1,n-r+1)\right] $ \\
$\times \left( I_{F(t_{2})}(r,n-r+1)-I_{F(t_{1})}(r,n-r+1)\right) ^{-1}$%
\end{tabular}%
$ & if & $x<t_{1}$ \\
&  &  \\
$%
\begin{tabular}{l}
$\{F(x)[I_{F(t_{2})}(r-1,n-r+1)-I_{F(t_{1})}(r-1,n-r+1)]$ \\
$+\binom{n-1}{r-1}(F(x)-F(t_{1}))F^{r-1}(t_{1})(1-F(t_{1}))^{n-r}\}$ \\
$\times \left( I_{F(t_{2})}(r,n-r+1)-I_{F(t_{1})}(r,n-r+1)\right) ^{-1}$%
\end{tabular}%
$ & if & $t_{1}\leq x\leq t_{2}$ \\
&  &  \\
$%
\begin{tabular}{l}
$\{F(x)[I_{F(t_{2})}(r-1,n-r+1)-I_{F(t_{1})}(r-1,n-r+1)]$ \\
$-\binom{n-1}{r-1}(F(x)-F(t_{2}))F^{r-1}(t_{2})(1-F(t_{2}))^{n-r}$ \\
$+\binom{n-1}{r-1}(F(x)-F(t_{1}))F^{r-1}(t_{1})(1-F(t_{1}))^{n-r}\}$ \\
$\times \left( I_{F(t_{2})}(r,n-r+1)-I_{F(t_{1})}(r,n-r+1)\right) ^{-1}$%
\end{tabular}%
$ & if & $x>t_{2}$%
\end{tabular}%
\right. .  \label{7}
\end{eqnarray}
\end{theorem}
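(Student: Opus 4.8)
The plan is to reduce everything to Theorem \ref{Theorem 1} via the elementary identity $P\{A\mid B\}=P\{A\cap B\}/P\{B\}$. Writing the conditioning event as a difference of sub-level events, and using that $F$ is absolutely continuous (so that $\{X_{r:n}=t_1\}$ and $\{X_{r:n}=t_2\}$ carry no mass), I would first record the numerator
\[
P\{X_1\le x,\ t_1\le X_{r:n}\le t_2\}=P\{X_1\le x,\ X_{r:n}\le t_2\}-P\{X_1\le x,\ X_{r:n}\le t_1\},
\]
and, for the normalizing constant,
\[
P\{t_1\le X_{r:n}\le t_2\}=\sum_{i=r}^{n}\binom{n}{i}\bigl[F^{i}(t_2)(1-F(t_2))^{n-i}-F^{i}(t_1)(1-F(t_1))^{n-i}\bigr],
\]
the latter coming from the standard cdf $P\{X_{r:n}\le t\}=\sum_{i=r}^{n}\binom{n}{i}F^{i}(t)(1-F(t))^{n-i}$ that already serves as the normalizer in Corollary \ref{Corollary 1}. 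This fixes the common denominator appearing in (\ref{5a}).

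For the numerator I would substitute the two-branch formula (\ref{0}) into each of the two joint cdfs. The only subtlety is that \emph{which} branch of (\ref{0}) applies to a given term depends on how $x$ sits relative to the corresponding $t_j$, and this is precisely what produces the three regimes in (\ref{5a}). Concretely, if $x\le t_1$ then $x\le t_1<t_2$ and both terms use the $x\le t$ branch, so the subtraction leaves only the difference $F(x)\sum_{i=r-1}^{n-1}\binom{n-1}{i}[\,\cdots\,]$. If $x>t_2$ then $x>t_2>t_1$ and both terms use the $x>t$ branch, so in addition the two boundary corrections $-\binom{n-1}{r-1}(F(x)-F(t_j))F^{r-1}(t_j)(1-F(t_j))^{n-r}$ survive, with opposite signs. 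The middle regime $t_1\le x\le t_2$ is the one to watch: there the $t_2$-term uses the $x\le t$ branch while the $t_1$-term uses the $x>t$ branch, so exactly one boundary correction survives.

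Once the three cases are assembled and divided by the denominator above, (\ref{5a}) follows. For the incomplete-beta form (\ref{7}) I would simply invoke the identities of Remark \ref{Remark 2}, namely $\sum_{i=r}^{n-1}\binom{n-1}{i}F^{i}(t)(1-F(t))^{n-1-i}=I_{F(t)}(r,n-r)$ and $\sum_{i=r-1}^{n-1}\binom{n-1}{i}F^{i}(t)(1-F(t))^{n-1-i}=I_{F(t)}(r-1,n-r+1)$, applied to both the $t_1$ and $t_2$ sums; the denominator then becomes $I_{F(t_2)}(r,n-r+1)-I_{F(t_1)}(r,n-r+1)$.

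The computation carries no genuine analytic difficulty. The single place demanding care — and where sign errors are easiest to make — is the bookkeeping of which branch of (\ref{0}) governs each of the two subtracted joint cdfs in the middle regime, together with keeping the sign of the lone surviving boundary correction consistent between the binomial form (\ref{5a}) and the beta form (\ref{7}).
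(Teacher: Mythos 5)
Your proposal is correct and is essentially the paper's own (implicit) argument: the paper states Theorem \ref{Theorem 2} with no proof at all, treating it as an immediate consequence of Theorem \ref{Theorem 1} via $P\{A\mid B\}=P\{A\cap B\}/P\{B\}$, with the numerator split into the two joint cdfs exactly as you describe and the denominator $P\{t_{1}\leq X_{r:n}\leq t_{2}\}$ expressed through the binomial (equivalently, incomplete beta) form of $P\{X_{r:n}\leq t\}$. One added value of actually carrying out your middle-regime bookkeeping is that it settles the sign question you flag at the end: since the $t_{1}$-term enters with an overall minus sign and itself carries the correction $-\binom{n-1}{r-1}(F(x)-F(t_{1}))F^{r-1}(t_{1})(1-F(t_{1}))^{n-r}$ from the $x>t$ branch of (\ref{0}), the lone surviving boundary correction appears with a \emph{plus} sign, so the beta form (\ref{7}) is the correct one, and the minus sign printed in the middle branch of (\ref{5a}) (as well as the unbound $t$ in its normalizing factor, which should be the difference of the two binomial sums at $t_{2}$ and $t_{1}$) is a typo in the paper.
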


and also as
\begin{eqnarray}
P\{X_{1} &\leq &x\mid t_{1}\leq X_{r:n}\leq t_{2}\}  \notag \\
&=&\left\{
\begin{tabular}{ll}
$%
\begin{tabular}{l}
$F(x)\left[ I_{F(t_{2})}(r-1,n-r+1)-I_{F(t_{1})}(r-1,n-r+1)\right] $ \\
$\times \left( I_{F(t_{2})}(r,n-r+1)-I_{F(t_{1})}(r,n-r+1)\right) ^{-1}$%
\end{tabular}%
,$ & $x<t_{1}$ \\
&  \\
$%
\begin{tabular}{l}
$\{F(x)I_{F(t_{2})}(r-1,n-r+1)-(F(x)-F(t_{1}))I_{F(t_{1})}(r,n-r)$ \\
$-F(t_{1})I_{F(t_{1})}(r-1,n-r+1)\}$ \\
$\times \left( I_{F(t_{2})}(r,n-r+1)-I_{F(t_{1})}(r,n-r+1)\right) ^{-1}$%
\end{tabular}%
,$ & $t_{1}\leq x\leq t_{2}$ \\
&  \\
$%
\begin{tabular}{l}
$\{(F(x)-F(t_{2}))I_{F(t_{2})}(r,n-r)+F(t_{2})I_{F(t_{2})}(r-1,n-r+1)$ \\
$-(F(x)-F(t_{1}))I_{F(t_{1})}(r,n-r)-F(t_{1})I_{F(t_{1})}(r-1,n-r+1)\}$ \\
$\times \left( I_{F(t_{2})}(r,n-r+1)-I_{F(t_{1})}(r,n-r+1)\right) ^{-1}$%
\end{tabular}%
,$ & $x>t_{2}.$%
\end{tabular}%
\right. .  \label{7a}
\end{eqnarray}

\subsection{\protect\bigskip Dependency}

Consider now the conditional random variables $Y_{1}^{(n,t)}=(X_{1}\mid
X_{n:n}\leq t)$, $Y_{2}^{(n,t)}=(X_{2}\mid X_{n:n}\leq
t),...,Y_{n}^{(n,t)}=(X_{n}\mid X_{n:n}\leq t).$

\begin{proposition}
\label{Proposition 1}$Y_{1}^{(n,t)},Y_{2}^{(n,t)},...,Y_{n}^{(n,t)}$ are iid
and \ $Y_{1}^{(n,t)}\overset{d}{=}(X_{1}\mid X_{1}\leq t)$
\end{proposition}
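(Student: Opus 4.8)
The plan is to exploit the fact that, when $r=n$, the conditioning event decouples completely across the components. The key observation is that $\{X_{n:n}\leq t\}$ is precisely the event $\{X_{1}\leq t,X_{2}\leq t,\ldots,X_{n}\leq t\}$, since the maximum is at most $t$ if and only if every observation is at most $t$. This is special to the case $r=n$: for a general order statistic $X_{r:n}$ the conditioning event does not factorize across the $X_i$, and that failure of factorization is exactly the source of the dependence treated in the remainder of this subsection.

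First I would write the joint conditional distribution function directly. For arbitrary $x_{1},\ldots,x_{n}$,
\[
P\{X_{1}\leq x_{1},\ldots,X_{n}\leq x_{n}\mid X_{n:n}\leq t\}=\frac{P\{X_{1}\leq x_{1},\ldots,X_{n}\leq x_{n},\,X_{1}\leq t,\ldots,X_{n}\leq t\}}{P\{X_{n:n}\leq t\}}.
\]
In the numerator the two constraints on each $X_{i}$ combine to $X_{i}\leq\min(x_{i},t)$, and by independence the numerator factors as $\prod_{i=1}^{n}F(\min(x_{i},t))$, while the denominator equals $F^{n}(t)$. Hence
\[
P\{X_{1}\leq x_{1},\ldots,X_{n}\leq x_{n}\mid X_{n:n}\leq t\}=\prod_{i=1}^{n}\frac{F(\min(x_{i},t))}{F(t)}.
\]

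The right-hand side is a product of $n$ identical one-variable factors, so the joint conditional cdf factorizes into identical marginals; this establishes at once that $Y_{1}^{(n,t)},\ldots,Y_{n}^{(n,t)}$ are independent and identically distributed. Setting all but one argument to $+\infty$ gives the common marginal
\[
P\{X_{1}\leq x\mid X_{n:n}\leq t\}=\frac{F(\min(x,t))}{F(t)},
\]
which agrees with (\ref{5.0}), being $F(x)/F(t)$ for $x\leq t$ and $1$ for $x>t$. Finally I would compare this with $(X_{1}\mid X_{1}\leq t)$: a direct computation gives $P\{X_{1}\leq x\mid X_{1}\leq t\}=F(\min(x,t))/F(t)$, the same expression, so $Y_{1}^{(n,t)}\overset{d}{=}(X_{1}\mid X_{1}\leq t)$. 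There is no genuine obstacle here; the whole argument rests on the factorization of the conditioning event, and the only point needing a little care is the $\min(x_{i},t)$ bookkeeping, so that the product really separates into identical factors.
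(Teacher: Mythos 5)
Your proof is correct, and it rests on the same key fact as the paper's: the conditioning event $\{X_{n:n}\leq t\}$ coincides with $\{X_{1}\leq t,X_{2}\leq t,\ldots,X_{n}\leq t\}$ and therefore factorizes across the components. The execution, however, differs in a way worth noting. The paper verifies the factorization only for the pair $(Y_{1}^{(n,t)},Y_{2}^{(n,t)})$, by enumerating the regions $x_{1}<x_{2}<t$, $x_{1}<t<x_{2}$, $x_{2}<t<x_{1}$, and $x_{1}>t,\,x_{2}>t$, and then leaves the general statement to be understood; strictly speaking that argument establishes pairwise independence, while the iid claim in the proposition requires the full $n$-variate joint cdf to factor into identical marginals. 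Your computation with $F(\min(x_{i},t))$ does exactly this in one step for all $n$ coordinates simultaneously: the identity $P\{X_{1}\leq x_{1},\ldots,X_{n}\leq x_{n}\mid X_{n:n}\leq t\}=\prod_{i=1}^{n}F(\min(x_{i},t))/F(t)$ proves mutual independence and identifies the common marginal as $F(\min(x,t))/F(t)=P\{X_{1}\leq x\mid X_{1}\leq t\}$ with no case analysis at all. What the paper's enumeration buys is an explicit display of the joint cdf on each region, which some readers may find more concrete; what your version buys is completeness (mutual rather than pairwise independence) and brevity, since the $\min$ bookkeeping absorbs all the cases.
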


\begin{proof}
The joint distributions of random variables $Y_{1}^{(n,t)}$ and $%
Y_{2}^{(n,t)}$ \ can be easily found as follows:
\end{proof}

\begin{eqnarray*}
P\{Y_{1}(n,t) &\leq &x_{1},Y_{2}(n,t)\leq x_{2}\} \\
&=&\frac{P\{X_{1}\leq x_{1},X_{2}\leq x_{2},X_{n:n}\leq t\}}{P\{X_{n:n}\leq
t\}} \\
&=&\frac{P\{X_{1}\leq x_{1},X_{2}\leq x_{2},X_{1}\leq t,...,X_{n}\leq t\}}{%
F^{n}(t)}
\end{eqnarray*}%
\begin{equation*}
=\left\{
\begin{tabular}{lll}
$\frac{F(x_{1})F(x_{2})}{F^{2}(t)}$ & if & $x_{1}<x_{2}<t$ or $x_{2}<x_{1}<t$
\\
&  &  \\
$\frac{F(x_{1})}{F(t)}$ & if & $x_{1}<t<x_{2}$ \\
&  &  \\
$\frac{F(x_{2})}{F(t)}$ & if & $x_{2}<t<x_{1}$ \\
&  &  \\
$1$ & if & $x_{1}>t$ and $x_{2}>t$%
\end{tabular}%
\right.
\end{equation*}%
\begin{equation*}
=\left\{
\begin{tabular}{lll}
$\frac{F(x_{1})}{F(t)}$ & if & $x_{1}\leq t$ \\
$1$ & if & $x_{1}>t$%
\end{tabular}%
\right. \times \left\{
\begin{tabular}{lll}
$\frac{F(x_{2})}{F(t)}$ & if & $x_{2}\leq t$ \\
$1$ & if & $x_{2}>t$%
\end{tabular}%
\right.
\end{equation*}%
\begin{equation*}
=P\{Y_{1}(n,t)\leq x_{1}\}P\{Y_{2}(n,t)\leq x_{2}\}
\end{equation*}

\bigskip

Consider the random variables $Z_{1}^{(n,t)}=(X_{1}\mid X_{1:n}\leq t)$, $%
Z_{2}^{(n,t)}=(X_{2}\mid X_{1:n}\leq t),...,Z_{n}^{(n,t)}=(X_{n}\mid
X_{1:n}\leq t).$

\begin{proposition}
\label{Proposition 2} The random variables $%
Z_{1}^{(n,t)},Z_{2}^{(n,t)},...,Z_{n}^{(n,t)}$ \ are dependent.
\end{proposition}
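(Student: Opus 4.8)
The plan is to mirror the structure of the proof of Proposition 1, but to exhibit a \emph{failure} of factorization rather than a success. By exchangeability it suffices to examine the pair $(Z_{1}^{(n,t)},Z_{2}^{(n,t)})$ and to show that its joint law does not split into the product of its two (identical) marginals. Rather than grind through the full two-dimensional conditional cdf with its several regions, I would evaluate both sides on a single conveniently chosen event, namely $\{Z_{1}^{(n,t)}>t,\;Z_{2}^{(n,t)}>t\}$. This is the region in which the computation collapses, because once $X_{1}>t$ and $X_{2}>t$ are imposed, the conditioning event $\{X_{1:n}\leq t\}$ is governed by the remaining variables alone.

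First I would record the normalizing constant $P\{X_{1:n}\leq t\}=1-(1-F(t))^{n}$. Writing $q=1-F(t)$ for brevity, the conditional marginal survival at $t$ is
\[
P\{Z_{1}^{(n,t)}>t\}=\frac{P\{X_{1}>t,\;X_{1:n}\leq t\}}{1-q^{n}}=\frac{q(1-q^{n-1})}{1-q^{n}},
\]
since $\{X_{1}>t,\;X_{1:n}\leq t\}$ means $X_{1}>t$ while at least one of $X_{2},\dots,X_{n}$ is $\leq t$, an event of probability $q(1-q^{n-1})$ by independence. (This also agrees with (\ref{6}) evaluated at $x=t$.)

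Next I would compute the joint quantity the same way: the event $\{X_{1}>t,\;X_{2}>t,\;X_{1:n}\leq t\}$ forces $X_{1},X_{2}>t$ and requires at least one of the remaining $n-2$ variables to fall below $t$, so its probability is $q^{2}(1-q^{n-2})$ and hence
\[
P\{Z_{1}^{(n,t)}>t,\;Z_{2}^{(n,t)}>t\}=\frac{q^{2}(1-q^{n-2})}{1-q^{n}}.
\]
Independence would require this to equal the product of the marginals, that is, $(1-q^{n})(1-q^{n-2})=(1-q^{n-1})^{2}$. Expanding, the difference of the two sides equals $-q^{n-2}(1-q)^{2}$, which is strictly negative whenever $0<F(t)<1$. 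Thus the factorization fails (indeed the two variables are negatively dependent), which proves the claim.

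The proof has no real obstacle; the only thing to get right is the choice of test event. Picking the upper-tail event $\{>t\}\times\{>t\}$ is precisely what makes both probabilities factor cleanly through independence, reducing the entire matter to the elementary inequality $(1-q)^{2}>0$. One should also note the degenerate boundary cases $F(t)\in\{0,1\}$, where the conditioning event is either null or certain and the statement is vacuous or trivial; excluding these is exactly the assumption $0<F(t)<1$ used above.
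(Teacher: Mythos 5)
Your proof is correct, and it finishes more decisively than the paper's own argument. The paper computes the full piecewise joint conditional cdf of $(Z_{1}^{(n,t)},Z_{2}^{(n,t)})$ via the complement $P\{X_{1}\leq x_{1},X_{2}\leq x_{2},X_{1:n}>t\}$, obtaining for $x_{1}>t,\ x_{2}>t$ the extra term $-(F(x_{1})-F(t))(F(x_{2})-F(t))(1-F(t))^{n-2}$, and then concludes by asserting that ``comparing (\ref{8}) with (\ref{6}), it can be observed'' that the variables are dependent; the decisive comparison with the product of the marginals is left to inspection. You exploit the same structural fact --- the dependence lives in the region where both variables exceed $t$ --- but reduce it to a single test event, the upper quadrant $\{Z_{1}^{(n,t)}>t\}\cap\{Z_{2}^{(n,t)}>t\}$, and verify the failure of factorization by the explicit identity $(1-q^{n})(1-q^{n-2})-(1-q^{n-1})^{2}=-q^{n-2}(1-q)^{2}<0$; your intermediate probabilities $q(1-q^{n-1})$ and $q^{2}(1-q^{n-2})$ are correct, and since independence must hold on every such rectangle, one failing rectangle suffices. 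Two things your route buys: the final step is a completed inequality rather than a declared observation, and it surfaces the non-degeneracy hypothesis $0<F(t)<1$ that the paper never states --- a hypothesis that is genuinely needed, since for $F(t)=1$ the conditioning event has probability one, the $Z_{i}$ then have the same joint law as the $X_{i}$, and the proposition is actually false, not merely trivial (a point where your phrasing ``vacuous or trivial'' is slightly too generous to the statement). The mild trade-off is informational: the paper's computation produces the whole joint law (\ref{8}), which has independent interest as a formula, whereas your argument certifies dependence only; as a proof of the proposition as stated, however, yours is self-contained and tighter.
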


\begin{proof}
Applying the total probability formula one can write
\begin{eqnarray*}
P\{Z_{1}(n,t) &\leq &x_{1},Z_{2}(n,t)\leq x_{2}\} \\
&=&\frac{P\{X_{1}\leq x_{1},X_{2}\leq x_{2},X_{1:n}\leq t\}}{P\{X_{1:n}\leq
t\}} \\
&=&\frac{P\{X_{1}\leq x_{1},X_{2}\leq x_{2}\}-P\{X_{1}\leq x_{1},X_{2}\leq
x_{2},X_{1:n}>t\}}{1-(1-F(t))^{n}}
\end{eqnarray*}%
\begin{equation}
=\left\{
\begin{tabular}{ll}
$(1-(1-F(t))^{-n}F(x_{1},x_{2}),$ & $x_{1}<t$ or $x_{2}<t$ \\
&  \\
\begin{tabular}{l}
$(1-(1-F(t))^{-n}[F(x_{1},x_{2})$ \\
$-(F(x_{1})-F(t))(F(x_{2})-F(t))(1-F(t))^{n-2}$%
\end{tabular}%
, & $x_{1}>t$ and $x_{2}>t$%
\end{tabular}%
\right. .  \label{8}
\end{equation}%
Comparing (\ref{8}) with (\ref{6}), it can be observed that $Z_{1}^{(n,t)}$
and $\ Z_{2}^{(n,t)}$ are dependent.
\end{proof}

\bigskip From the Proposition 2 we see that the random variables $%
Z_{1}^{(n,t)},Z_{2}^{(n,t)},...,Z_{n}^{(n,t)}$ are dependent. However, if we
consider the random variables $T_{1}^{(n,t)}=(X_{1}\mid X_{1:n}>t)$, $%
T_{2}^{(n,t)}=(X_{2}\mid X_{1:n}>t),...,T_{n}^{(n,t)}=(X_{n}\mid X_{1:n}>t),$
\ it is interesting to observe that they are iid.

\begin{proposition}
The random variables $T_{1}^{(n,t)},T_{2}^{(n,t)},...,T_{n}^{(n,t)}$ are iid
and \ $T_{1}^{(n,t)}\overset{d}{=}(X_{1}\mid X_{1}>t).$
\end{proposition}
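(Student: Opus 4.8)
The statement claims that the conditional variables $T_1^{(n,t)}=(X_1\mid X_{1:n}>t),\dots,T_n^{(n,t)}=(X_n\mid X_{1:n}>t)$ are iid, each distributed as $(X_1\mid X_1>t)$. The plan is to compute the joint conditional cdf directly and show it factorizes into a product of identical marginals. The key observation that drives everything is the event identity $\{X_{1:n}>t\}=\{X_1>t,X_2>t,\dots,X_n>t\}$, i.e. the minimum exceeds $t$ exactly when every coordinate does. Unlike the case $\{X_{1:n}\le t\}$ treated in Proposition 2 (where conditioning couples the variables through the ``at least one is small'' constraint), conditioning on $\{X_{1:n}>t\}$ imposes a \emph{separable} constraint, one per coordinate, and this is what produces independence.

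First I would write, for arbitrary $x_1,\dots,x_n$,
\[
P\{T_1^{(n,t)}\le x_1,\dots,T_n^{(n,t)}\le x_n\}
=\frac{P\{X_1\le x_1,\dots,X_n\le x_n,\;X_{1:n}>t\}}{P\{X_{1:n}>t\}}.
\]
The denominator is $P\{X_{1:n}>t\}=(1-F(t))^n$ by the iid assumption. For the numerator I would use the event identity above to rewrite it as $P\{t<X_1\le x_1,\dots,t<X_n\le x_n\}$ when each $x_i>t$ (and more carefully, intersecting $\{X_i\le x_i\}$ with $\{X_i>t\}$ coordinatewise). Since the $X_i$ are independent, this numerator factorizes as $\prod_{i=1}^n P\{t<X_i\le x_i\}=\prod_{i=1}^n\bigl(F(x_i)-F(t)\bigr)$ for $x_i>t$, and the factor for any coordinate with $x_i\le t$ is simply $0$. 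Dividing by $(1-F(t))^n$ splits the denominator as $\prod_{i=1}^n(1-F(t))$, so the joint cdf becomes $\prod_{i=1}^n \frac{F(x_i)-F(t)}{1-F(t)}$ for $x_i>t$ and $0$ otherwise.

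It then remains to identify the single factor $\frac{F(x_i)-F(t)}{1-F(t)}$ (for $x_i>t$, and $0$ for $x_i\le t$) as the cdf of $(X_1\mid X_1>t)$. This is immediate from the definition of conditional probability,
\[
P\{X_1\le x\mid X_1>t\}=\frac{P\{t<X_1\le x\}}{P\{X_1>t\}}=\frac{F(x)-F(t)}{1-F(t)},\qquad x>t,
\]
so the joint cdf is exactly the product of these identical marginals, establishing both mutual independence and the common distribution. I do not anticipate a serious obstacle here; the only point requiring care is the bookkeeping of the regions $x_i\le t$ versus $x_i>t$, where one must check that the factored form correctly returns $0$ as soon as any single $x_i\le t$ (consistent with the support of $(X_1\mid X_1>t)$ lying in $(t,\infty)$). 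The contrast with Proposition 2 is worth flagging in one sentence: there the conditioning event is a union-type constraint that does not separate across coordinates, whereas here it is an intersection of per-coordinate constraints, which is precisely why independence survives.
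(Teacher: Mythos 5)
Your proof is correct and takes essentially the same route as the paper: both rest on the identity $\{X_{1:n}>t\}=\{X_{1}>t,\dots,X_{n}>t\}$, which makes the conditioning event separable across coordinates, so the conditional joint cdf factorizes into identical factors $\frac{F(x_{i})-F(t)}{1-F(t)}$ (for $x_{i}>t$, $0$ otherwise), each recognized as the cdf of $(X_{1}\mid X_{1}>t)$. The only difference is that you compute the full $n$-dimensional joint cdf, which directly yields mutual independence, whereas the paper verifies only the bivariate factorization for $T_{1}^{(n,t)},T_{2}^{(n,t)}$ (strictly speaking, pairwise independence) and leaves the extension implicit; your version is, if anything, the more complete one.
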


\begin{proof}
Consider
\begin{eqnarray*}
P\{T_{1}(n,t) &\leq &x_{1},T_{2}(n,t)\leq x_{2}\} \\
&=&\frac{P\{X_{1}\leq x_{1},X_{2}\leq x_{2},X_{1:n}>t\}}{P\{X_{1:n}>t\}} \\
&=&\frac{P\{X_{1}\leq x_{1},X_{2}\leq x_{2},X_{1}>t,...,X_{n}>t\}}{%
(1-F^{n}(t))}
\end{eqnarray*}%
\begin{equation}
=\left\{
\begin{tabular}{lll}
$(1-F(t))^{-2}(F(x_{1})-F(t))(F(x_{2})-F(t))$ & if & $x_{1}>t,x_{2}>t$ \\
&  &  \\
$0$ & if & $x_{1}\leq t$ or $x_{2}\leq t$%
\end{tabular}%
\right.  \label{10}
\end{equation}%
It is clear that
\begin{eqnarray}
P\{T_{1}^{(n,t)} &\leq &x\}=P\{X_{1}\leq x\mid X_{1:n}>t\}  \notag \\
&=&\left\{
\begin{tabular}{lll}
$(1-F(t))^{-1}(F(x)-F(t))$ & if & $x>t$ \\
&  &  \\
$0$ & if & $x\leq t$%
\end{tabular}%
\right.  \label{11}
\end{eqnarray}
\end{proof}

\bigskip Comparing (\ref{10}) and (\ref{11}) we see that $%
T_{1}^{(n,t)},T_{2}^{(n,t)},...,T_{n}^{(n,t)}$ are iid random variables.

\begin{proposition}
\label{Proposition 3}Let $1\leq r<n.$ The random variables $%
Y_{1}^{(r,t)}=(X_{1}\mid X_{r:n}\leq t)$, $Y_{2}^{(r,t)}=(X_{2}\mid
X_{r:n}\leq t),...,Y_{n}^{(r,t)}=(X_{n}\mid X_{r:n}\leq t)$ are dependent.
\end{proposition}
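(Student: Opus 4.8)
The plan is to follow the template already used for Propositions 1 and 2: express the bivariate conditional cdf of two of the conditioned variables as the ratio
\[
P\{Y_{1}^{(r,t)}\leq x_{1},Y_{2}^{(r,t)}\leq x_{2}\}=\frac{P\{X_{1}\leq x_{1},X_{2}\leq x_{2},X_{r:n}\leq t\}}{P\{X_{r:n}\leq t\}},
\]
and then compare it against the product of the two one-dimensional marginals, each of which is already furnished by Corollary \ref{Corollary 1}. Since it suffices to locate a single region of $(x_{1},x_{2})$ on which the factorization fails, I would restrict attention to the corner $x_{1}\leq t$, $x_{2}\leq t$, where the numerator is cleanest to evaluate, and treat $2\leq r<n$; the remaining case $r=1$ is exactly Proposition \ref{Proposition 2} (with $X_{r:n}=X_{1:n}$) and can simply be cited.

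On that corner the events $X_{1}\leq x_{1}\leq t$ and $X_{2}\leq x_{2}\leq t$ already force two of the $n$ variables below $t$, so $X_{r:n}\leq t$ reduces to requiring at least $r-2$ of the remaining independent variables $X_{3},\ldots,X_{n}$ to fall below $t$. By independence and the binomial count this gives
\[
P\{X_{1}\leq x_{1},X_{2}\leq x_{2},X_{r:n}\leq t\}=F(x_{1})F(x_{2})\sum_{i=r-2}^{n-2}\binom{n-2}{i}F^{i}(t)(1-F(t))^{n-2-i},
\]
and dividing by $P\{X_{r:n}\leq t\}=\sum_{i=r}^{n}\binom{n}{i}F^{i}(t)(1-F(t))^{n-i}$ yields the joint cdf on this corner. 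Writing down the product of the two marginals from the $x\leq t$ branch of (\ref{4}), independence would require the identity
\[
\left(\sum_{i=r-2}^{n-2}\binom{n-2}{i}F^{i}(t)(1-F(t))^{n-2-i}\right)\left(\sum_{i=r}^{n}\binom{n}{i}F^{i}(t)(1-F(t))^{n-i}\right)=\left(\sum_{i=r-1}^{n-1}\binom{n-1}{i}F^{i}(t)(1-F(t))^{n-1-i}\right)^{2}.
\]

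Setting $p=F(t)$, the three sums are the binomial tail probabilities of $\mathrm{Bin}(n-2,p)$, $\mathrm{Bin}(n,p)$ and $\mathrm{Bin}(n-1,p)$ at levels $r-2$, $r$ and $r-1$, and the displayed relation is a multiplicative identity among these tails that does not hold in general. I would close the argument by exhibiting one explicit choice for which the two sides differ; for example $n=3$, $r=2$, $p=1/2$ gives left side $1\cdot\tfrac{1}{2}=\tfrac{8}{16}$ but right side $\left(\tfrac{3}{4}\right)^{2}=\tfrac{9}{16}$, so the factorization fails and the conditional variables are dependent. The main obstacle here is bookkeeping rather than anything conceptual: one must verify carefully that on the corner $x_{1},x_{2}\leq t$ the constraint $X_{r:n}\leq t$ genuinely reduces to ``at least $r-2$ of $n-2$'' (which is why $r\geq2$ is needed, the two sure observations over-counting when $r=1$), after which the proof is just the algebraic comparison above.
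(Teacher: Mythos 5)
Your setup is sound and, in fact, supplies something the paper omits: Proposition \ref{Proposition 3} is stated there without any proof at all, and the ingredients you use are exactly the ones the paper's own machinery suggests --- your corner formula is Lemma \ref{Lemma 1} (equivalently (\ref{c1})) specialized to $k=2$ (with the exponent written correctly, unlike the typo in (\ref{c1})), the marginals come from Corollary \ref{Corollary 1}, and delegating $r=1$ to Proposition \ref{Proposition 2} is legitimate. Your reduction is also correct: writing $p=F(t)$ and $S_{m,j}=\sum_{i=j}^{m}\binom{m}{i}p^{i}(1-p)^{m-i}$, the conditional joint cdf factorizes on the corner $x_{1},x_{2}\leq t$ if and only if $S_{n-2,r-2}\,S_{n,r}=S_{n-1,r-1}^{2}$ (and taking $x_{1}=x_{2}=t$ shows a failure of this identity really does yield dependence).

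The gap is in how you dispose of that identity. The proposition quantifies over \emph{every} $1\leq r<n$, and the parameter $t$ (hence $p=F(t)$) is arbitrary, so you must show the identity fails for all admissible $(n,r,p)$ with $2\leq r<n$ and $p\in(0,1)$; your single instance $(n,r,p)=(3,2,1/2)$ proves dependence only for a three-component system with $r=2$ at a time point where $F(t)=1/2$. Asserting that the relation ``does not hold in general'' is not a proof, and this missing step is precisely the conceptual content --- not bookkeeping, as you claim. Fortunately it is true and provable. Conditioning on the first one, respectively two, of the Bernoulli indicators $\{X_{i}\leq t\}$ gives $S_{n-2,r-2}=f(r-2)$, $S_{n-1,r-1}=pf(r-2)+(1-p)f(r-1)$ and $S_{n,r}=p^{2}f(r-2)+2p(1-p)f(r-1)+(1-p)^{2}f(r)$, where $f(k)=P\{\mathrm{Bin}(n-2,p)\geq k\}$, whence
\begin{equation*}
S_{n-1,r-1}^{2}-S_{n-2,r-2}\,S_{n,r}=(1-p)^{2}\left[f(r-1)^{2}-f(r-2)\,f(r)\right].
\end{equation*}
So what you need is \emph{strict} log-concavity of the binomial tail at $k=r-1$. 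This holds: with $b(k)$ the $\mathrm{Bin}(n-2,p)$ pmf, one computes $f(k)^{2}-f(k-1)f(k+1)=b(k)^{2}+f(k+1)\left[b(k)-b(k-1)\right]$, which is positive trivially when $b(k)\geq b(k-1)$, and otherwise follows from the bound $f(k+1)<b(k)\rho/(1-\rho)$ with $\rho=b(k)/b(k-1)<1$, a consequence of the strict log-concavity of $b$ itself. With this lemma inserted, your argument proves the proposition for every $1\leq r<n$ and every $p\in(0,1)$; without it, you have verified only one special case of a statement that is universally quantified over $n$, $r$, $t$ and $F$.
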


\subsection{The absolutely continuous case and the conditional distribution
of a sample observation given an order statistic}

The conditional distribution of \ a sample in case where $F$ is absolutely
continuous underlying distribution was first considered by \ Nagaraja and
Nevzorov (1997) for a single observation and Ahmadi (2018) for multiple
observations with many interesting characterization results and applications
in reliability.

It follows from Theorem 2 that if the distribution function $F(t)$ $\ $\ is
absolutely continuous with $F^{\prime }(t)=$ $f(t),$ then \
\begin{equation*}
\lim_{h\rightarrow 0}P\{X_{1}\leq x\mid t\leq X_{r:n}\leq t+h\}
\end{equation*}%
\begin{equation*}
=\left\{
\begin{tabular}{lll}
$%
\begin{tabular}{l}
$\left( I_{F(t+h)}(r,n-r+1)-I_{F(t)}(r,n-r+1)\right) ^{-1}$ \\
$\times F(x)\left[ I_{F(t+h)}(r-1,n-r+1)-I_{F(t)}(r-1,n-r+1)\right] $ \\
\end{tabular}%
$ & if & $x<t$ \\
&  &  \\
$%
\begin{tabular}{l}
$\left( I_{F(t+h)}(r,n-r+1)-I_{F(t)}(r,n-r+1)\right) ^{-1}$ \\
$\times \{F(x)[I_{F(t+h)}(r-1,n-r+1)-I_{F(t)}(r-1,n-r+1)]$ \\
$+\binom{n-1}{r-1}(F(x)-F(t))F^{r-1}(t)(1-F(t))^{n-r}\}$%
\end{tabular}%
$ & if & $t\leq x\leq t+h$ \\
&  &  \\
$%
\begin{tabular}{l}
$\left( I_{F(t+h)}(r,n-r+1)-I_{F(t)}(r,n-r+1)\right) ^{-1}$ \\
$\times \{F(x)[I_{F(t+h)}(r-1,n-r+1)-I_{F(t)}(r-1,n-r+1)]$ \\
$-\binom{n-1}{r-1}(F(x)-F(t+h))F^{r-1}(t+h)(1-F(t+h))^{n-r}$ \\
$+\binom{n-1}{r-1}(F(x)-F(t))F^{r-1}(t)(1-F(t))^{n-r}\}$%
\end{tabular}%
$ & if & $x>t+h$%
\end{tabular}%
\right. ..
\end{equation*}%
Consider $x<t.$ \bigskip\ $\frac{1}{h}(I_{F(t+h)}(a,b)-I_{F(t)}(a,b))%
\rightarrow \frac{1}{B(a,b)}F^{a-1}(t)(1-F(t))^{b-1}f(t),$ as $h\rightarrow
0.$ Therefore, \
\begin{eqnarray*}
\lim_{h\rightarrow 0}P\{X_{1} &\leq &x\mid t\leq X_{r:n}\leq t+h\} \\
&=&\lim_{h\rightarrow 0}\frac{F(x)\left[
I_{F(t+h)}(r-1,n-r+1)-I_{F(t)}(r-1,n-r+1)\right] }{%
I_{F(t+h)}(r,n-r+1)-I_{F(t)}(r,n-r+1} \\
&=&\frac{F(x)\frac{1}{B(r-1,n-r+1)}F^{r-2}(t)(1-F(t))^{n-r}f(t)}{\frac{1}{%
B(r,n-r+1)}F^{r-1}(t)(1-F(t))^{n-r}f(t)}=\frac{r-1}{n}\frac{F(x)}{F(t)}.
\end{eqnarray*}%
Consider $t\leq x<t+h.$ Then $\frac{1}{h}(F(x)-F(t))\rightarrow f(t),$ as $%
h\rightarrow 0$ and
\begin{eqnarray*}
\lim_{h\rightarrow 0}P\{X_{1} &\leq &x\mid t\leq X_{r:n}\leq t+h\} \\
&=&\lim_{h\rightarrow 0}\left\{ \frac{%
F(x)[I_{F(t+h)}(r-1,n-r+1)-I_{F(t)}(r-1,n-r+1)]}{%
I_{F(t+h)}(r,n-r+1)-I_{F(t)}(r,n-r+1)}\right. \\
&&\left. +\frac{\binom{n-1}{r-1}(F(x)-F(t))F^{r-1}(t)(1-F(t))^{n-r}}{%
I_{F(t+h)}(r,n-r+1)-I_{F(t)}(r,n-r+1)}\right\} \\
&=&\frac{r-1}{n}\frac{F(x)}{F(t)}+\frac{1}{n}=\frac{r}{n}.
\end{eqnarray*}

\bigskip Consider $x>t+h.$ Then $\frac{\binom{n-1}{r-1}}{B(r,n-r+1)}=\frac{1%
}{n}$ and
\begin{eqnarray*}
\lim_{h\rightarrow 0}P\{X_{1} &\leq &x\mid t\leq X_{r:n}\leq t+h\} \\
&=&\lim_{h\rightarrow 0}\left\{ \frac{%
F(x)[I_{F(t+h)}(r-1,n-r+1)-I_{F(t)}(r-1,n-r+1)]}{%
I_{F(t+h)}(r,n-r+1)-I_{F(t)}(r,n-r+1)}\right. \\
&&-\frac{\binom{n-1}{r-1}}{I_{F(t+h)}(r,n-r+1)-I_{F(t)}(r,n-r+1)}%
\{(F(x)-F(t+h)) \\
&&\times F^{r-1}(t+h)(1-F(t+h))^{n-r}-(F(x)-F(t))F^{r-1}(t)(1-F(t))^{n-r}\}
\\
&=&\frac{r-1}{n}\frac{F(x)}{F(t)}-\frac{1}{n}\frac{\frac{d}{dt}\left[
(F(x)-F(t))F^{r-1}(t)(1-F(t))^{n-r}\right] }{F(t)^{r-1}(1-F(t))^{n-r}} \\
&=&\frac{(n-r)(F(x)-F(t))}{n(1-F(t))}+\frac{r}{n}
\end{eqnarray*}

\bigskip

\bigskip Therefore
\begin{eqnarray}
P\{X_{1} &\leq &x\mid X_{r:n}=t\}  \notag \\
&=&\lim_{h\rightarrow 0}P\{X_{1}\leq x\mid t\leq X_{r:n}\leq t+h\}  \notag \\
&=&\left\{
\begin{tabular}{lll}
$\frac{r-1}{n}\frac{F(x)}{F(t)}$ & if & $x<t$ \\
$\frac{(n-r)(F(x)-F(t))}{n(1-F(t))}+\frac{r}{n}$ & if & $x\geq t$%
\end{tabular}%
\right. .  \label{9}
\end{eqnarray}%
This cdf has a jump at the point $x=t$ and $F(t)-F(t-0)=\frac{1}{n}$.
Formula (\ref{8}) was first \ presented in Nagaraja and Nevzorov (1997). For
more results on conditional distributions of two or multiple observations\
given \ $X_{r:n}=t$ see the recent paper of Ahmadi and Nagaraja (2018). \ In
this comprehensive paper the joint pdf of $X_{1},X_{2},...,X_{k}$ given $%
X_{r:n}$ has been derived under the condition that $X_{1},X_{2},...,X_{k}$
are distinct than $X_{r:n}.$ The result is applied for calculating the
number of inspections that one needs to detect all failed components after
the system failure.

Below in Figure 2 we provide the graphs of cdfs and pdfs of $P\{X_{1}\leq
x\mid X_{r:n}\leq t\}$ and $P\{X_{1}\leq x\mid X_{r:n}=t\}$ in the case of
exponential underlying distributions.%
\\\\\\
\includegraphics[scale=0.15]{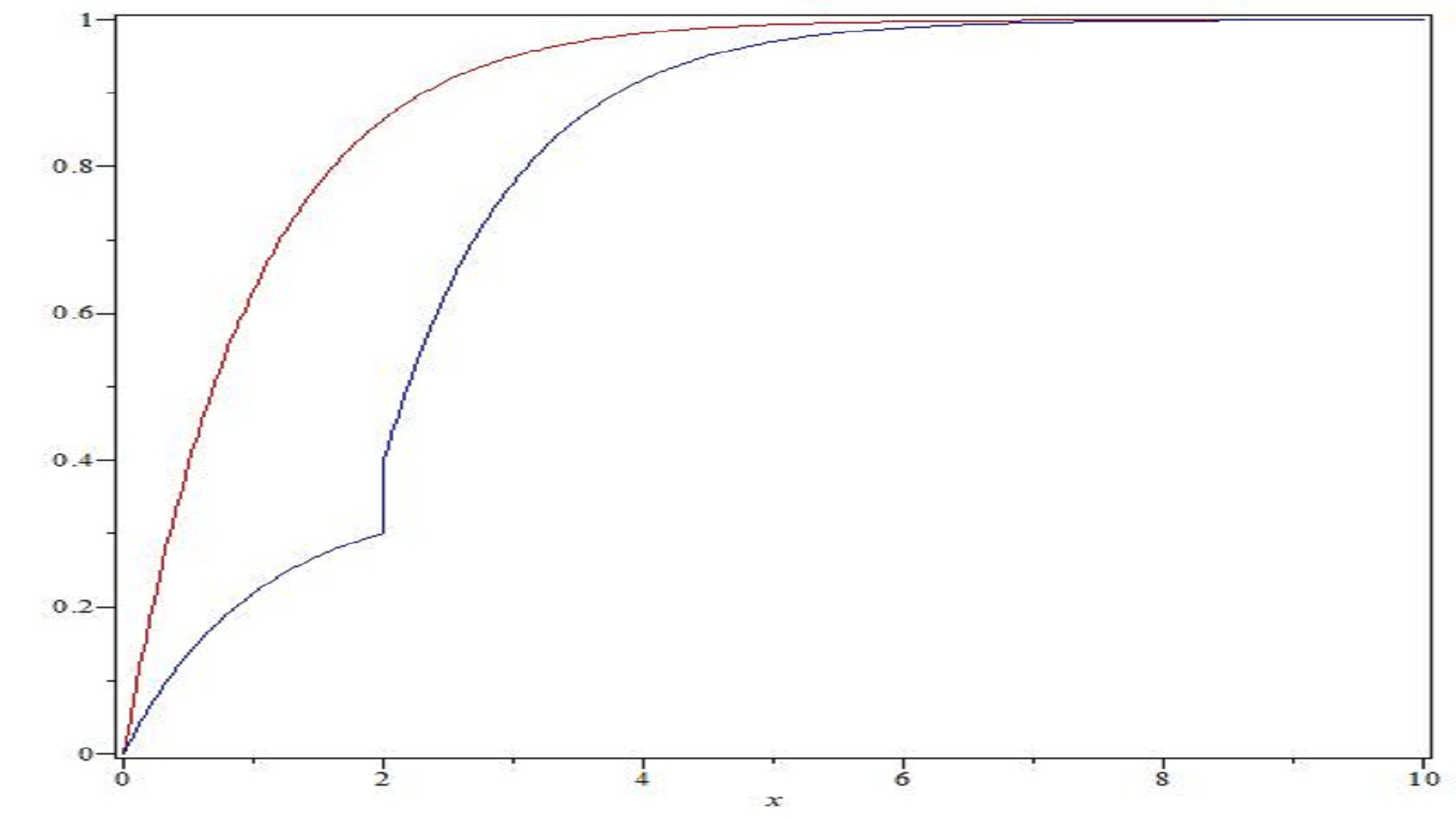}
\includegraphics[scale=0.15]{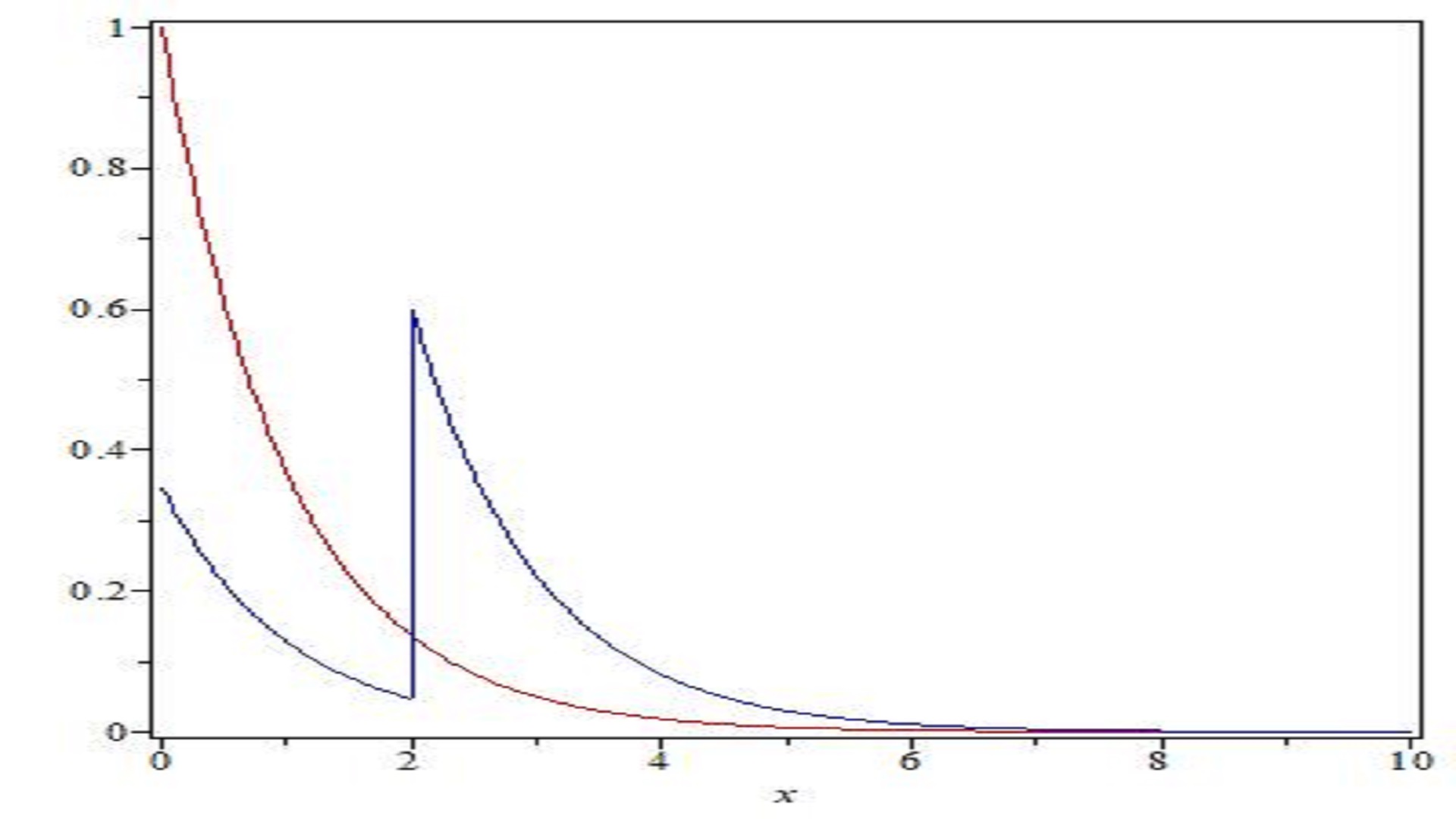}

\begin{tabular}{l}
$\text{Figure 2. The graphs of cdfs and pdfs of }P\{X_{1}\leq x\mid
X_{r:n}\leq t\}\text{ (red,above) }$ \\
$\text{and }P\{X_{1}\leq x\mid X_{r:n}=t\}\text{ (blue,below) for }%
F(x)=1-\exp (-x),x\geq 0,$ \\
$n=10,r=4,$ and $t=2$%
\end{tabular}%

\begin{remark}
\label{Remark 3}It is interesting to point out that the cdf (\ref{9}) has a
jump at the point $x=t$ and $F(t)-F(t-0)=\frac{1}{n}$, while (\ref{4}) is
continuous at the point $x=t.$
\end{remark}

\bigskip

\section{The joint distributions of a set of observations and an order
statistic}

Now we are interested in joint distributions of $X_{1},X_{2},...,X_{k}$ and $%
X_{r:n},$ $1\leq r\leq n$ and $1<k<r.$ \ Consider first $k=2.$ If $x_{1}\leq
t$ and $x_{2}\leq t,$ \ then we have
\begin{eqnarray}
F_{1,2:r}(x_{1},x_{2},t) &\equiv &  \notag \\
P\{X_{1} &\leq &x_{1},X_{2}\leq x_{2},X_{r:n}\leq t\} \\
&=&P\{X_{1}\leq x_{1},X_{2}\leq x_{2},\text{at least}  \notag \\
&&r-2\text{ of }X_{3},X_{4},...,X_{n}\text{ are less than or equal to }t\}
\notag \\
&=&F(x_{1})F_{2}(x_{2})\sum\limits_{i=r-2}^{n-2}\binom{n-2}{i}%
F^{i}(t)(1-F(t))^{n-i}  \notag \\
&=&F(x_{1})F_{2}(x_{2})I_{F(t)}(r-2,n-r+1).  \label{c1}
\end{eqnarray}%
It is clear that $F_{1,2:r}(x_{1},x_{2},t)$ has the following form
\begin{equation*}
F_{1,2:r}(x_{1},x_{2},t)=\left\{
\begin{tabular}{lll}
$F(x_{1})F(x_{2})I_{F(t)}(r-2,n-r+1),$ & $x_{1}\leq t,x_{2}\leq t$ &  \\
$\psi _{1}(x_{1},x_{2},t)$ & $x_{1}\leq t,x_{2}>t$ &  \\
$\psi _{2}(x_{1},x_{2},t)$ & $x_{1}>t,x_{2}\leq t$ &  \\
$\psi _{3}(x_{1},x_{2},t)$ & $x_{1}>t,x_{2}>t$ &
\end{tabular}%
\right.
\end{equation*}%
For the cases $x_{1}\leq t,x_{2}>t$ or $x_{1}>t,x_{2}\leq t$ o \ r $%
x_{1}>t,x_{2}>t$ the functions $\psi _{i}(x_{1},x_{2},t),$ $i=1,2,3$ can
easily be calculated by using the total probability formula considering
different cases, i.e.

\begin{eqnarray*}
P\{X_{1} &\leq &x_{1},X_{2}\leq x_{2},X_{r:n}\leq t\} \\
&=&P\{X_{1}\leq x_{1},X_{2}\leq x_{2},X_{1}\leq t,X_{2}\leq t,X_{r:n}\leq t\}
\\
+P\{X_{1} &\leq &x_{1},X_{2}\leq x_{2},X_{1}\leq t,X_{2}>t,X_{r:n}\leq t\} \\
+P\{X_{1} &\leq &x_{1},X_{2}\leq x_{2},X_{1}>t,X_{2}\leq t,X_{r:n}\leq t\} \\
+P\{X_{1} &\leq &x_{1},X_{2}\leq x_{2},X_{1}>t,X_{2}>t,X_{r:n}\leq t\}.
\end{eqnarray*}%
It is also clear that $F_{1,2:r}(x_{1},x_{2},t)$ is continuous for all
values of $x_{1},x_{2}$ and $t.$ The pdf is
\begin{equation*}
f_{1,2:r}(x_{1},x_{2},t)=\left\{
\begin{tabular}{lll}
$f(x_{1})f(x_{2})\frac{1}{B(r-2,n-r+1)}F(t)^{r-3}(1-F(t))^{n-r}f(t),$ & $%
x_{1}\leq t,x_{2}\leq t$ &  \\
$\frac{\partial ^{2}\psi _{1}(x_{1},x_{2},t)}{\partial x_{1}\partial t}$ & $%
x_{1}\leq t,x_{2}>t$ &  \\
$\frac{\partial ^{2}\psi _{2}(x_{1},x_{2},t)}{\partial x_{1}\partial t}$ & $%
x_{1}>t,x_{2}\leq t$ &  \\
$\frac{\partial ^{2}\psi _{3}(x_{1},x_{2},t)}{\partial x_{1}\partial t}$ & $%
x_{1}>t,x_{2}>t$ &
\end{tabular}%
\right.
\end{equation*}

\bigskip Actually, our interest is concentrated on the part of the joint
distribution $P\{X_{1}\leq x_{1},X_{2}\leq x_{2},...,X_{k}\leq
x_{k},X_{r:n}\leq t\}$ for $x_{1}<t,...,x_{k}<t.$

\begin{lemma}
\label{Lemma 1}For any $x_{1},x_{2},...,x_{k}\leq t$ it is true that%
\begin{eqnarray}
&&F_{1,2,...,k:r}(x_{1},x_{2},...,x_{k},t)  \notag \\
&=&\left\{
\begin{array}{ccc}
F(x_{1})F(x_{2})\cdots F(x_{k})I_{F(t)}(r-k,n-r+1), & 1\leq k<r &  \\
F(x_{1})F(x_{2})\cdots F(x_{k}) & k\geq r &
\end{array}%
\right. .  \label{ca1}
\end{eqnarray}
\end{lemma}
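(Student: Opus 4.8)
The plan is to argue directly by counting, exactly as in part (a) of Theorem~\ref{Theorem 1} and the $k=2$ computation (\ref{c1}), rather than through any total-probability decomposition; the hypothesis $x_{1},\dots,x_{k}\le t$ is precisely what keeps the argument short. The key observation is that on the event $\bigcap_{i=1}^{k}\{X_{i}\le x_{i}\}$ we automatically have $X_{1}\le t,\dots,X_{k}\le t$, since each $x_{i}\le t$. Hence the first $k$ of the $n$ observations are already known not to exceed $t$, and they contribute $k$ to the count of observations at most $t$.

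First I would recall that $\{X_{r:n}\le t\}$ is exactly the event that at least $r$ of $X_{1},\dots,X_{n}$ are $\le t$. Intersecting this with $\bigcap_{i=1}^{k}\{X_{i}\le x_{i}\}$ and using the observation above, the constraint $\{X_{r:n}\le t\}$ reduces, for $1\le k<r$, to the requirement that at least $r-k$ of the remaining variables $X_{k+1},\dots,X_{n}$ are $\le t$; for $k\ge r$ it is satisfied automatically, because the $k\ge r$ observations already known to be $\le t$ force $X_{r:n}\le t$ regardless of the others.

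Next I would exploit independence. The events $\{X_{1}\le x_{1}\},\dots,\{X_{k}\le x_{k}\}$ depend only on $X_{1},\dots,X_{k}$, whereas the event ``at least $r-k$ of $X_{k+1},\dots,X_{n}$ are $\le t$'' depends only on $X_{k+1},\dots,X_{n}$, so the joint probability factorizes as
\begin{equation*}
F_{1,2,\dots,k:r}(x_{1},\dots,x_{k},t)=\Big(\prod_{i=1}^{k}F(x_{i})\Big)\,P\{\text{at least }r-k\text{ of }X_{k+1},\dots,X_{n}\text{ are }\le t\}.
\end{equation*}
Since the number of the $n-k$ iid variables $X_{k+1},\dots,X_{n}$ falling below $t$ is Binomial$(n-k,F(t))$, the second factor equals $\sum_{i=r-k}^{n-k}\binom{n-k}{i}F^{i}(t)(1-F(t))^{n-k-i}$, which by the binomial--incomplete-beta identity already invoked in Remark~\ref{Remark 2} is $I_{F(t)}\big(r-k,(n-k)-(r-k)+1\big)=I_{F(t)}(r-k,n-r+1)$. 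This yields the first line of (\ref{ca1}); for $k\ge r$ the second factor is $1$, giving $\prod_{i=1}^{k}F(x_{i})$.

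There is little genuine difficulty here, the statement being a direct generalization of the $k=1$ and $k=2$ cases. The only point demanding care is the case split at $k=r$: one must check that for $k\ge r$ the order-statistic constraint is vacuous on the region $\{x_{i}\le t\}$, and that for $k<r$ the bookkeeping ``at least $r$ overall'' versus ``at least $r-k$ among the last $n-k$'' is correct, along with the simplification of the second beta parameter $(n-k)-(r-k)+1$ to $n-r+1$. An alternative would be induction on $k$, peeling off one observation at a time, but the direct counting argument is cleaner and sidesteps any conditioning.
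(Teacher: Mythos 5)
Your proof is correct and follows essentially the same route as the paper's: reduce the event $\{X_{r:n}\le t\}$ on $\bigcap_{i\le k}\{X_i\le x_i\}$ to ``at least $r-k$ of $X_{k+1},\dots,X_n$ are $\le t$'' (vacuous when $k\ge r$), factor by independence, and identify the resulting binomial sum with $I_{F(t)}(r-k,n-r+1)$. The only difference is cosmetic: you spell out the binomial--incomplete-beta conversion and the parameter simplification $(n-k)-(r-k)+1=n-r+1$, which the paper leaves implicit.
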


\begin{proof}
Indeed, if $k<r$ then $P\{X_{1}\leq x_{1},X_{2}\leq x_{2},...,X_{k}\leq
x_{k},X_{r:n}\leq t\}=P\{X_{1}\leq x_{1},X_{2}\leq x_{2},...,X_{k}\leq
x_{k}, $ at least $r-k$ of $\ X_{k+1},...X_{n}$ are less than or equal to $%
t\}=F(x_{1})F(x_{2})\cdots F(x_{k})\sum\limits_{i=r-k}^{n-k}P\{$exactly $i$
of $X_{k+1},...X_{n}$ are less than or equal to $t\}$=$\sum%
\limits_{i=r-k}^{n-k}\binom{n-k}{i}F^{i}(t)(1-F(t))^{n-k-i}.$

If $r>k,$ then it is clear that $P\{X_{1}\leq x_{1},X_{2}\leq
x_{2},...,X_{k}\leq x_{k},X_{r:n}\leq t\}=P\{X_{1}\leq x_{1},X_{2}\leq
x_{2},...,X_{k}\leq x_{k}\},$ since $x_{1},x_{2},...,x_{k}\leq t.$
\end{proof}

From the Lemma 1 it follows that \ for $x_{1},x_{2},...,x_{k}\leq t$ the
joint pdf of $X_{1},X_{2},...,X_{k},X_{r:n}$ is%
\begin{eqnarray}
&&f_{1,2,...,k:r}(x_{1},x_{2},...,x_{k},t)  \notag \\
&=&\left\{
\begin{array}{cc}
\frac{1}{B(r-k,n-r+1)}F^{r-k-1}(t)(1-F(t))^{n-r}f(t)\prod%
\limits_{i=1}^{k}f(x_{i}), & 1\leq k<r \\
f(x_{1})f(x_{2})\cdots f(x_{k}), & k\geq r.%
\end{array}%
\right. .  \label{c4}
\end{eqnarray}

\begin{theorem}
\label{Theorem 3}It is true that for $k<r,$
\begin{eqnarray*}
P\{X_{1} &\leq &X_{r:n},X_{2}\leq X_{r:n},...,X_{k}\leq X_{r:n}\} \\
&=&\frac{(n-k)!(r-1)!}{n!(r-k-1)!}.
\end{eqnarray*}
\end{theorem}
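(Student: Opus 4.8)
The plan is to read the joint density of $X_1,\dots,X_k$ and $X_{r:n}$ directly off Lemma \ref{Lemma 1} (equation (\ref{c4})) and integrate it over the region that encodes the event. Since each requirement $X_i\le X_{r:n}$ is the requirement $x_i\le t$ with $t=X_{r:n}$, and since $k<r$ puts us in the first branch of (\ref{c4}), I would write
\begin{equation*}
P\{X_1\le X_{r:n},\dots,X_k\le X_{r:n}\}=\int_0^\infty\!\!\int_0^t\!\!\cdots\!\!\int_0^t f_{1,\dots,k:r}(x_1,\dots,x_k,t)\,dx_1\cdots dx_k\,dt .
\end{equation*}

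Next I would carry out the $k$ inner integrations. Each factor integrates over $[0,t]$ as $\int_0^t f(x_i)\,dx_i=F(t)$, so the product $\prod_{i=1}^k f(x_i)$ contributes $F^{k}(t)$ and the integrand collapses to a single function of $t$. Combining this $F^{k}(t)$ with the $F^{r-k-1}(t)$ already present in (\ref{c4}) gives $F^{r-1}(t)$, leaving
\begin{equation*}
P=\frac{1}{B(r-k,n-r+1)}\int_0^\infty F^{r-1}(t)\,(1-F(t))^{n-r}\,f(t)\,dt .
\end{equation*}
The substitution $u=F(t)$ (legitimate since $F$ is absolutely continuous) converts the integral into the Beta integral $\int_0^1 u^{r-1}(1-u)^{n-r}\,du=B(r,n-r+1)$, so $P=B(r,n-r+1)/B(r-k,n-r+1)$. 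The remaining step is routine Gamma-function bookkeeping: with $B(a,b)=\Gamma(a)\Gamma(b)/\Gamma(a+b)$ the common factor $\Gamma(n-r+1)$ cancels and the ratio reduces to $\dfrac{\Gamma(r)\,\Gamma(n-k+1)}{\Gamma(n+1)\,\Gamma(r-k)}=\dfrac{(r-1)!\,(n-k)!}{n!\,(r-k-1)!}$, which is the claimed value. As a sanity check, $k=1$ gives $\frac{r-1}{n}$, matching the left limit at $x=t$ of the conditional cdf in (\ref{9}).

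The step that needs the most care — and the real substance of the statement — is the very first one: justifying that integrating the density (\ref{c4}) over $\{x_i\le t\}$ computes the right quantity, given that $X_{r:n}$ is itself one of the sample variables. Because $X_{r:n}$ coincides with one of $X_1,\dots,X_n$, the joint law of $(X_i,X_{r:n})$ carries positive mass on the diagonal $\{X_i=X_{r:n}\}$ (the event of probability $1/n$ that $X_i$ is the observation realizing the $r$th order statistic), so $X_i<X_{r:n}$ and $X_i\le X_{r:n}$ are not interchangeable here. The density (\ref{c4}) arises by differentiating the distribution function of Lemma \ref{Lemma 1} on the region where the $r$th order statistic is furnished by one of $X_{k+1},\dots,X_n$; it therefore captures precisely the absolutely continuous part in which none of $X_1,\dots,X_k$ equals $X_{r:n}$. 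I would state this scope explicitly so that the integral is matched to the intended event, namely that the ranks of $X_1,\dots,X_k$ all lie in $\{1,\dots,r-1\}$.

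Finally, I would record a purely combinatorial cross-check, which also serves as an alternative proof. Since the $n$ observations are almost surely distinct and all $n!$ orderings are equally likely, the probability that the ranks of $X_1,\dots,X_k$ all fall in $\{1,\dots,r-1\}$ is obtained by drawing $k$ distinct ranks from the first $r-1$ positions, namely $\dfrac{(r-1)(r-2)\cdots(r-k)}{n(n-1)\cdots(n-k+1)}=\dfrac{(r-1)!\,(n-k)!}{n!\,(r-k-1)!}$, in agreement with the density computation. I expect the analytic route through (\ref{c4}) to be the intended one, with the diagonal/tie bookkeeping being the only genuinely delicate point.
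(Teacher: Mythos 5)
Your core computation coincides with the paper's own proof: it too integrates the density (\ref{c4}) over $\{x_{1}\leq t,\dots ,x_{k}\leq t\}$, substitutes $u=F(t)$ to obtain $\int_{0}^{1}u^{r-1}(1-u)^{n-r}\,du$, and reads off the ratio $B(r,n-r+1)/B(r-k,n-r+1)$; the only difference is that the paper's normalizing constant is misprinted as $B(r-2,n-r+1)$ (a leftover from the preceding $k=2$ discussion), while your $B(r-k,n-r+1)$ is the correct one. What you add beyond the paper is substantive. First, your diagonal bookkeeping is not mere caution: the theorem is stated with $X_{i}\leq X_{r:n}$, but the paper's proof opens with the strict event $X_{i}<X_{r:n}$, and the two genuinely differ because $P\{X_{i}=X_{r:n}\}=1/n$. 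The displayed formula is the probability that the ranks of $X_{1},\dots ,X_{k}$ all lie in $\{1,\dots ,r-1\}$, i.e.\ the strict event; with $\leq$ one would get instead $\frac{r!\,(n-k)!}{n!\,(r-k)!}$ (ranks in $\{1,\dots ,r\}$), for example $r/n$ rather than $(r-1)/n$ when $k=1$. Your insistence that the integral of (\ref{c4}) accounts only for the absolutely continuous part of the law off the diagonals, and hence computes the strict-inequality probability, is precisely the point the paper leaves unaddressed. Second, your rank-counting argument is a complete alternative proof rather than just a check: it needs only continuity of $F$ (no density), bypasses the measure-theoretic discussion entirely, and yields the formula in one line. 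The analytic route through (\ref{c4}) has the advantage of staying within the machinery of Lemma \ref{Lemma 1} that the paper reuses for Theorem \ref{Theorem 4}, where the same $\leq$ versus $<$ ambiguity propagates into the definition of the indicators $\xi _{i}$; the combinatorial route buys generality and transparency.
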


\begin{proof}
Let $k<r.$ Then using (\ref{c4}) one can write \
\begin{eqnarray*}
P\{X_{1} &<&X_{r:n},X_{2}<X_{r:n},...,X_{k}<X_{r:n}\} \\
&=&\idotsint\limits_{\{(x_{1},x_{2},...,x_{k},t):x_{1}\leq t,x_{2}\leq
t,...,x_{k}\leq
t\}}f_{1,2,...,k:r}(x_{1},x_{2},...,x_{k},t)dx_{1}dx_{2}\cdots dx_{k}dt \\
&=&\frac{1}{B(r-2,n-r+1)}\int\limits_{0}^{\infty }\int\limits_{0}^{t}\cdots
\int\limits_{0}^{t}F^{r-k-1}(t)(1-F(t))^{n-r}f(t) \\
&&\times \prod\limits_{i=1}^{k}f(x_{i})dx_{1}dx_{2}\cdots dx_{k}dt \\
&=&\frac{1}{B(r-2,n-r+1)}\int\limits_{0}^{1}t^{r-1}(1-t)^{n-r}dt=\frac{%
B(r,n-r+1)}{B(r-2,n-r+1)}=\frac{(n-k)!(r-1)!}{n!(r-k-1)!}.
\end{eqnarray*}%
The theorem is thus proved.
\end{proof}

\section{Number of inspections we need in order to detect failed components
in an (n-r+1)-out-of-n system}

Consider a coherent system with $(n-r+1)-$out-of$-n$ structure and assume
that the cdf \ $F(x)=P\{X_{1}\leq x\}$ is absolutely continuous and $%
F^{\prime }(x)=f(x).$ The $(n-r+1)-$out-of$-n$ $\ $system is intact until at
least $n-r+1$ of the components are alive, and it fails if the number of
failed components exceed $r,$ and the lifetime of this system is $%
T(X_{1},X_{2},...,X_{n})=X_{r:n}$. \ Assume that under periodical
inspections we get information about the state of the system and replace the
failed components with functioning ones. We are interested in the following
problem: in $(n-r+1)$-out-of-$n$ system some $k<r$ components may fail, the
system, however, will still be working (because of $(n-r+1)$-out-of-$n$ \
structure)$.$ In the planning of periodical inspections to detect failed
components and replace them with the working ones, an important question is:
what is the probability that we need $m$ inspections to detect $k$ failed
components? \ \ Define a random variable $N_{r:n}(k)$ to be a number of
periodical inspections we need to detect $k$ failed components. The expected
value of $N_{r:n}(k)$ will be a required average number. In engineering
designs of many technical systems, the cost of inspections is high, and
information about the expected number of inspections may reduce expenses. To
understand the random variable $N_{r:n}(k)$ we assume that the components of
the system are shown as $A_{1},A_{2},...,A_{n}$ and the corresponding
lifetimes are $X_{1},X_{2},...,X_{n}.$ For example, let $n=6,k=2,r=4,$ then $%
N_{4:6}(2)=2$ means that in two inspections we detect $2$ failed items. This
can be done as follows: in the first inspection we see that $A_{1}$ is
failed, (therefore, we must have $X_{1}<X_{4:6})$ and in the second
inspection we see that $A_{2}$ is failed (then, we must have $%
X_{2}<X_{4:6}). $ If $N_{4:6}(2)=3$ this means that we detect $2$ failed
components in $3$ inspections, and this can be done as follows: $\ $\ in the
first inspection, we have $A_{1}$ failed, in the second inspection we have $%
A_{2}$ is alive, and in the third inspection we have $A_{3}$ failed; or in
the first inspection we have $A_{1}$ is alive, in the second inspection we
have $A_{2}$ failed and in the third inspection we have $A_{3}$ failed. \
These events can be represented in strings of zeros and ones as follows: $\
N_{4:6}(2)=2$ $\Leftrightarrow \{11\};$ $N_{4:6}(2)=3$ $\Leftrightarrow
\{101,011\};$ $N_{4:6}(2)=4\Leftrightarrow \{1001,0101,0011\},$ $%
N_{4:6}(2)=5 $ $\Leftrightarrow \{10001,01001,00101,00011\}$ etc. The
following theorem allows to calculate the distribution of random variable $%
N_{r:n}(k).$

\begin{theorem}
\label{Theorem 4} \ For $1\leq k<r$ it is true that%
\begin{eqnarray}
P\{N_{r:n}(k) &=&m\}  \notag \\
&=&\binom{m-1}{k-1}\sum\limits_{j=0}^{m-k}(-1)^{j}\binom{m-k}{j}\frac{%
(n-k-j)!(r-1)!}{n!(r-k-j-1)!}  \notag \\
m &=&k,...,n-r+k+1  \label{c4a}
\end{eqnarray}
\end{theorem}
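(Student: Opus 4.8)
The plan is to reduce the distribution of $N_{r:n}(k)$ to a purely combinatorial count of binary strings multiplied by a single ``string probability'' that I will evaluate by inclusion--exclusion. First I would fix the interpretation already used in the examples: inspecting the components in the order $A_1,A_2,\dots$, a component $A_i$ is recorded as \emph{failed} exactly when $X_i<X_{r:n}$ and as \emph{alive} when $X_i\ge X_{r:n}$. The event $\{N_{r:n}(k)=m\}$ then says that the $k$th detected failure occurs precisely at the $m$th inspection; equivalently, among the first $m$ inspected components there are exactly $k$ failed ones and the $m$th one is failed. Encoding failed as $1$ and alive as $0$, this event is the disjoint union of all length-$m$ binary strings with exactly $k$ ones and a $1$ in the last position, of which there are $\binom{m-1}{k-1}$ (the last entry is forced, and the remaining $k-1$ ones are distributed among the first $m-1$ slots), matching the leading binomial factor.

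Second, I would exploit that $X_1,\dots,X_n$ are iid and hence exchangeable. Each of the $\binom{m-1}{k-1}$ strings is the event that a prescribed set of $k$ of the first $m$ components satisfies $X_i<X_{r:n}$ while the other $m-k$ satisfy $X_i\ge X_{r:n}$; by exchangeability every such event has the same probability
\[
q_m=P\{X_1<X_{r:n},\dots,X_k<X_{r:n},\,X_{k+1}\ge X_{r:n},\dots,X_m\ge X_{r:n}\}.
\]
Because distinct strings describe disjoint failed/alive patterns, summing gives $P\{N_{r:n}(k)=m\}=\binom{m-1}{k-1}\,q_m$, so the whole problem collapses to identifying $q_m$ with the inner alternating sum.

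Third, I would compute $q_m$ by inclusion--exclusion on the ``alive'' constraints, writing each $\{X_{k+i}\ge X_{r:n}\}$ as the complement of $\{X_{k+i}<X_{r:n}\}$. Expanding the product yields
\[
q_m=\sum_{S\subseteq\{k+1,\dots,m\}}(-1)^{|S|}\,P\{X_j<X_{r:n}\text{ for all }j\in\{1,\dots,k\}\cup S\}.
\]
For $|S|=j$ the summand is the probability that $k+j$ prescribed components lie below $X_{r:n}$, which is exactly the quantity delivered by Theorem \ref{Theorem 3} with $k$ replaced by $k+j$, namely $\frac{(n-k-j)!(r-1)!}{n!\,(r-k-j-1)!}$. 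Grouping the $\binom{m-k}{j}$ sets of each size $j$ reproduces precisely the stated expression, and substituting back finishes the computation.

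Finally I would justify the index range $m=k,\dots,n-r+k+1$ and the vanishing of the out-of-range terms. The minimum $m=k$ arises when the first $k$ inspected components are all failed; since exactly $r-1$ components satisfy $X_i<X_{r:n}$ and the remaining $n-r+1$ are alive, at most $n-r+1$ alive components can precede the $k$th failure, forcing $m\le n-r+k+1$. The main obstacle I anticipate is the bookkeeping in the inclusion--exclusion step: Theorem \ref{Theorem 3} is valid only for fewer than $r$ failures, and when $k+j\ge r$ the underlying event would require at least $r$ components strictly below $X_{r:n}$, which is impossible and must contribute $0$. One therefore has to check that the factorial formula $\tfrac{(n-k-j)!(r-1)!}{n!(r-k-j-1)!}$ indeed returns $0$ in that regime, under the convention that the reciprocal of the factorial of a negative integer is $0$, so that formally extending the sum up to $j=m-k$ is harmless.
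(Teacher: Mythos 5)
Your proof is correct and follows essentially the same route as the paper's: the factor $\binom{m-1}{k-1}$ obtained from exchangeability and counting binary strings ending in a $1$, an alternating sum over the ``alive'' positions, and Theorem \ref{Theorem 3} supplying $\lambda_{k+j}=\frac{(n-k-j)!(r-1)!}{n!(r-k-j-1)!}$. The only differences are cosmetic: you derive the alternating-sum identity for exchangeable binary variables (the paper's formula (\ref{c5}), cited from George and Bowman (1995)) directly by inclusion--exclusion, and you explicitly check the two points the paper glosses over, namely that the terms with $k+j\geq r$ vanish (consistently with the factorial convention) and that $m$ is confined to $k,\dots,n-r+k+1$.
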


\begin{proof}
Consider the random variables
\begin{equation*}
\xi _{i}=\left\{
\begin{tabular}{lll}
$1$ & if & $X_{i}\leq X_{r:n}$ \\
$0$ & , & otherwise%
\end{tabular}%
\right. ,i=1,2,...,n.
\end{equation*}%
It is clear that $\xi _{1},\xi _{2},...,\xi _{n}$ are exchangeable. This can
be easily understood by considering, for example, the following two
probabilities:
\begin{eqnarray*}
P\{\xi _{1} &=&1,\xi _{2}=0\}=P\{X_{1}\leq
X_{r:n},X_{2}>X_{r:n}\}=P\{X_{1}\leq X_{r:n}\}-P\{X_{1}\leq
X_{r:n},X_{2}\leq X_{r:n}\} \\
P\{\xi _{1} &=&0,\xi _{2}=1\}=P\{X_{1}>X_{r:n},X_{2}\leq
X_{r:n}\}=P\{X_{2}\leq X_{r:n}\}-P\{X_{1}\leq X_{r:n},X_{2}\leq X_{r:n}\}.
\end{eqnarray*}%
We will use the following formula for exchangeable binary variables (see
George and Bowman (1995)):%
\begin{eqnarray}
P\{\xi _{1} &=&1,\xi _{2}=1,...,\xi _{k}=1,\xi _{k+1}=0,...,\xi _{m}=0\}
\notag \\
&=&\sum\limits_{j=0}^{m-k}(-1)^{j}\binom{m-k}{j}\lambda _{k+j},  \label{c5}
\end{eqnarray}%
where%
\begin{equation*}
\lambda _{k}=P\{\xi _{1}=1,\xi _{2}=1,...,\xi _{k}=1\}.
\end{equation*}%
Using exchangeability and (\ref{c5}) we have
\begin{eqnarray}
P\{N_{r:n}(k) &=&m\}  \notag \\
&=&\sum\limits_{i_{1},i_{2},...,i_{m}}P\{X_{i_{1}}\leq
X_{r:n},...,X_{i_{k}}\leq X_{r:n},X_{i_{k+1}}>X_{r:n},...,X_{i_{m}}>X_{r:n}\}
\notag \\
&=&\binom{m-1}{k-1}P\{\xi _{1}=1,\xi _{2}=1,...,\xi _{k}=1,\xi
_{k+1}=0,...,\xi _{m}=0\}  \notag \\
&=&\binom{m-1}{k-1}\sum\limits_{j=0}^{m-k}(-1)^{j}\binom{m-k}{j}\lambda
_{k+j}  \label{c6}
\end{eqnarray}%
\bigskip From the Theorem 3, one can write
\begin{eqnarray}
\lambda _{k+j} &=&P\{\xi _{1}=1,\xi _{2}=1,...,\xi _{k+j}=1\}  \notag \\
&=&\frac{(n-k-j)!(r-1)!}{n!(r-k-j-1)!}  \label{c7}
\end{eqnarray}%
\bigskip Taking (\ref{c7}) into account in (\ref{c6}), one obtains (\ref{c4a}%
). The theorem is thus proved.
\end{proof}

\

\textbf{Numerical example}

\begin{example}
\label{Numerical example}Below in Table 1 we present numerical values of $%
P\{N_{r:n}(k)=m\}$ for particular values of $n=12,$ $r=5,$ $k=3$ and $%
n=12,r=7,k=2,$ $m=k,k+1,...,n-r+k+1.$%
\begin{eqnarray*}
&&%
\begin{tabular}{ll}
$%
\begin{tabular}{|l|l|}
\hline
$m$ & $P\{N_{5:12}(3)=m\}$ \\ \hline
$3$ & $1/55=0.01818$ \\ \hline
$4$ & $8/165=0.04849$ \\ \hline
$5$ & $14/165=0.08485$ \\ \hline
$6$ & $4/33=0.12121$ \\ \hline
$7$ & $5/33=0.15152$ \\ \hline
$8$ & $28/165=0.16970$ \\ \hline
$9$ & $28/165=0.16970$ \\ \hline
$10$ & $8/55=0.14545$ \\ \hline
$11$ & $1/11=0.09091$ \\ \hline
\end{tabular}%
$ & $%
\begin{tabular}{|l|l|}
\hline
$m$ & $P\{N_{7:12}(2)=m\}$ \\ \hline
- & - \\ \hline
$2$ & $5/22=0.22727$ \\ \hline
$3$ & $3/11=0.27273$ \\ \hline
$4$ & $5/22=0.22727$ \\ \hline
$5$ & $5/33=0.15152$ \\ \hline
$6$ & $25/308=0.08117$ \\ \hline
$7$ & $5/154=0.03247$ \\ \hline
$8$ & $1/132=0.00756$ \\ \hline
- & - \\ \hline
\end{tabular}%
$%
\end{tabular}
\\
&&%
\begin{tabular}{l}
$\text{Table 1. Values of }P\{N_{r:n}(k)=m\}$ \\
$n=12,r=5,k=3,m=3,4,...,11$ (left) \\
$n=12,r=7,k=2,m=2,3,...,8$ (right)%
\end{tabular}%
\end{eqnarray*}
\end{example}

The expected value of \textbf{\ }$N_{5:12}(3)$ is
\begin{equation*}
EN_{5:12}(3)=\sum\limits_{m=3}^{11}mP\{N_{5:12}(3)=m\}=7.8,
\end{equation*}%
and the expected value of $N_{7:12}(2)$ is
\begin{equation*}
EN_{5:12}(3)=\sum\limits_{m=3}^{11}mP\{N_{7:12}(2)=m\}=3.7143.
\end{equation*}%
Therefore, in $8$-out-of-$12$ system for detecting three failed components,
we need an average of $8$ inspections and for a $6$-out-of-$12$ system to
detect $2$ failed components, we need an average of$\ 4$ inspections. \

\subsection{A discussion on mean residual and mean past functions}

Consider a coherent system with $(n-r+1)-$out-of$-n$ structure with life
times of the components having cdf $F$ and pdf $f.$\ Assume that under
periodical inspections, we get information about the state of the system.
For example, we may know that at inspection time $t_{1}$ system was
functioning, but at the next inspection time $t_{2}>t_{1}$ it appeared to
have failed. The exact failure time, however, is not known, i.e. it is
censured in time interval $(t_{1},t_{2}).$ \ One may be interested in
residual life of any of the components having this information, i.e. the
conditional mean residual life function of the components given that the
failure of the system has occurred at time interval $(t_{1},t_{2})$. \ More
precisely, we consider a function
\begin{equation*}
\varphi _{n}(t_{1},t_{2})=E\{X_{1}-t_{2}\mid t_{1}<X_{r:n}<t_{2}\}
\end{equation*}%
and call it the mean residual life (MRL) function of the component of system
failed in $(t_{1},t_{2}).$ \ Another important function is
\begin{equation*}
\psi _{n}(t_{1},t_{2})=E\{t_{2}-X_{2}\mid t_{1}<X_{r:n}<t_{2}\},
\end{equation*}%
the mean past (MP) function of the components given that the system has
failed in $(t_{1},t_{2}).$ These two functions may be important for
reliability engineers, because the knowledge of $\varphi _{n}(t_{1},t_{2})$
will help to determine expected residual life of the components having
information about the censured failure time of the system under periodical
inspections. The function $\psi _{n}(t_{1},t_{2})$ provide information about
the inactivity time of the components under the conditions described above.
\ The pdf of conditional distribution $P\{X_{1}\leq x\mid t_{1}\leq
X_{r:n}\leq t_{2}\}$ is
\begin{equation}
f_{x\mid t_{1}\leq X_{r:n}\leq t_{2}}(x\mid t_{1},t_{2})  \notag
\end{equation}%
\begin{equation}
=\left\{
\begin{tabular}{lll}
$%
\begin{tabular}{l}
$\left( I_{F(t_{2})}(r,n-r+1)-I_{F(t_{1})}(r,n-r+1)\right) ^{-1}$ \\
$\times f(x)\left[ I_{F(t_{2})}(r-1,n-r+1)-I_{F(t_{1})}(r-1,n-r+1)\right] $
\\
\end{tabular}%
$ & if & $x<t_{1}$ \\
&  &  \\
$%
\begin{tabular}{l}
$\left( I_{F(t_{2})}(r,n-r+1)-I_{F(t_{1})}(r,n-r+1)\right) ^{-1}$ \\
$\times \{f(x)[I_{F(t_{2})}(r-1,n-r+1)-I_{F(t_{1})}(r,n-r)]$%
\end{tabular}%
$ & if & $t_{1}\leq x\leq t_{2}$ \\
&  &  \\
$%
\begin{tabular}{l}
$\left( I_{F(t_{2})}(r,n-r+1)-I_{F(t_{1})}(r,n-r+1)\right) ^{-1}$ \\
$\times \{f(x)[I_{F(t_{2})}(r,n-r)-I_{F(t_{1})}(r,n-r)]$ \\
\end{tabular}%
$ & if & $x>t_{2}.$%
\end{tabular}%
\right. .  \label{g1}
\end{equation}%
\ We have

\begin{eqnarray}
\varphi _{n}(t_{1},t_{2}) &=&E\{X_{1}-t_{2}\mid t_{1}<X_{r:n}<t_{2}\}  \notag
\\
&=&\frac{1}{I_{F(t_{2})}(r,n-r+1)-I_{F(t_{1})}(r,n-r+1)}\int%
\limits_{0}^{t_{1}}xf(x)dx  \notag \\
&&+\frac{I_{F(t_{2})}(r-1,n-r+1)-I_{F(t_{1})}(r,n-r)}{%
I_{F(t_{2})}(r,n-r+1)-I_{F(t_{1})}(r,n-r+1)}\int%
\limits_{t_{1}}^{t_{2}}xf(x)dx  \notag \\
&&+\frac{I_{F(t_{2})}(r,n-r)-I_{F(t_{1})}(r,n-r)}{%
I_{F(t_{2})}(r,n-r+1)-I_{F(t_{1})}(r,n-r+1)}\int\limits_{t_{2}}^{\infty
}xf(x)dx-t_{2}  \label{g2}
\end{eqnarray}%
\newline
The mean past function $\psi _{n}(t_{1},t_{2})$ can be written as follows:%
\begin{eqnarray}
\psi _{n}(t_{1},t_{2}) &=&E\{t_{2}-X_{1}\mid t_{1}<X_{r:n}<t_{2}\}  \notag \\
&=&t_{2}-\frac{1}{I_{F(t_{2})}(r,n-r+1)-I_{F(t_{1})}(r,n-r+1)}%
\int\limits_{0}^{t_{1}}xf(x)dx  \notag \\
&&-\frac{I_{F(t_{2})}(r-1,n-r+1)-I_{F(t_{1})}(r,n-r)}{%
I_{F(t_{2})}(r,n-r+1)-I_{F(t_{1})}(r,n-r+1)}\int%
\limits_{t_{1}}^{t_{2}}xf(x)dx  \notag \\
&&-\frac{I_{F(t_{2})}(r,n-r)-I_{F(t_{1})}(r,n-r)}{%
I_{F(t_{2})}(r,n-r+1)-I_{F(t_{1})}(r,n-r+1)}\int\limits_{t_{2}}^{\infty
}xf(x)dx.  \label{g3}
\end{eqnarray}%
According to Remark 3, the conditional distribution $P\{X_{1}\leq x\mid
t_{1}<X_{r:n}<t_{2}\}$ is continuous and the MRL and MP functions of the
components given that system fails in $[t_{1},t_{2}]$ can be easily
calculated from (\ref{g2}) and (\ref{g3}).

\begin{conclusion}
In this paper we consider the joint distribution of elements of random
sample and the order statistic of the same sample. \ The joint distributions
expressed in terms of binomial sums and incomplete beta functions are
presented, and the dependence \ between related conditional random variables
is discussed. The \ distribution results \ are used to solve an important
problem in reliability analysis, to detect the failed components of coherent
system. In particular, we consider the $(n-r+1)$-out-of-$n$ system which can
function even though $k$ of the components $(k<r)$ $\ $have failed. The
number of inspections we need to detect certain number of failed components
is important information which can help to control costs. In $(n-r+1)$%
-out-of-$n$ system we define a random variable $N_{r:n}(k)$ which is the
number of inspections we need to detect $k$ components and find the
distribution of this random variable. The expected value of $N_{r:n}(k)$
provide important information which can be used in the planning of
periodical inspections of coherent systems. \
\end{conclusion}

\end{document}